\documentclass[12pt]{article}
\usepackage{graphicx}
\usepackage{amsmath,amsfonts,amssymb,amsthm,bbm}
\title{A Brunn--Minkowski theory\\ for coconvex sets of finite volume}
\author{Rolf Schneider}
\date{}
\sloppy
\jot3mm

\oddsidemargin 0.2cm
\evensidemargin 0.2cm
\topmargin 0.4cm
\headheight0cm
\headsep0cm
\textheight23.5cm
\topskip2ex
\textwidth15.5cm
\parskip1ex plus0.5ex minus0.5ex

\newcommand{\Sn}{{\mathbb S}^{n-1}}

\newcommand{\R}{{\mathbb R}}

\newcommand{\C}{{\mathcal C}}

\newcommand{\K}{{\mathcal K}}

\newcommand{\Rn}{{\mathbb R}^n}

\newcommand{\N}{{\mathbb N}}

\newcommand{\Ha}{\mathcal{H}}

\newcommand{\B}{\mathcal{B}}
\newcommand{\D}{{\rm d}}

\newcommand{\A}{{\mathcal A}}

\newcommand{\cL}{{\mathcal L}}

  \renewcommand{\exp}{{\rm exp}\,}

\newtheorem{theorem}{Theorem}
\newtheorem{lemma}{Lemma}
\newtheorem{definition}{Definition}

\begin{document}
\maketitle

\begin{abstract}
Let $C$ be a closed convex cone in $\R^n$, pointed and with interior points. We consider sets of the form $A=C\setminus K$, where $K\subset C$ is a closed convex set. If $A$ has finite volume (Lebesgue measure), then $A$ is called a $C$-coconvex set, and $K$ is called $C$-close. The family of $C$-coconvex sets is closed under the addition $\oplus$ defined by $C\setminus(A_1\oplus A_2)= (C\setminus A_1)+(C\setminus A_2)$. We develop first steps of a Brunn--Minkowski theory for $C$-coconvex sets, which relates this addition to the notion of volume. In particular, we establish the equality condition for a Brunn--Minkowski type inequality (with reversed inequality sign) and introduce mixed volumes and their integral representations. For $C$-close sets, surface area measures and cone-volume measures can be defined, in analogy to similar notions for convex bodies. They are Borel measures on the intersection of the unit sphere with the interior of the polar cone of $C$. We prove a Minkowski-type uniqueness theorem for $C$-close sets with equal surface area measures. Concerning Minkowski-type existence problems, we give conditions for a Borel measure to be either the surface area measure or the cone-volume measure of a $C$-close set. These conditions are sufficient in the first case, and necessary and sufficient in the second case.\\[1mm]
{\em Mathematics Subject Classification:} 52A20, 52A40, 52A39, 52A30\\[1mm]
{\em Keywords:} Coconvex set; complemented Brunn--Minkowski inequality; mixed volume; surface area measure; cone-volume measure; Minkowski type existence theorem
\end{abstract}

\section{Introduction}\label{sec1}

Let $C$ be a pointed closed convex cone with apex $o$ and with interior points in Euclidean space $\R^n$. This cone will be fixed throughout the following. A closed convex set $K\subset C$ is called $C$-{\em close} if $C\setminus K$ has positive finite Lebesgue measure, and in this case we say that $C\setminus K$ is a $C$-{\em coconvex} set. It should be observed that a $C$-close set can be entirely contained in the interior of $C$. 

Let $A_0=C\setminus K_0$, $A_1=C\setminus K_1$ be $C$-coconvex sets. Their {\em co-sum} is defined by 
$$ A_0\oplus A_1 = C\setminus(K_0+K_1),$$
where $+$ denotes the usual Minkowski addition. Note that $ K_0+K_1\subset C+C=C$. Whereas the Minkowski sum of two unbounded closed convex sets need not be closed in general, it is easy to see that $K_0+K_1$ is closed, because $K_0,K_1$ are subsets of a pointed cone. That $A_0\oplus A_1$ has finite volume, is a consequence of the following `complemented Brunn--Minkowski inequality'. Here, $\lambda A:= \{\lambda a:a\in A\}$ for $\lambda\ge 0$ and a $C$-coconvex set $A$. By $V_n$ we denote the volume (Lebesgue measure).

\begin{theorem}\label{T1}
Let  $A_0,A_1$ be $C$-coconvex sets, and let $\lambda\in (0,1)$. Then
\begin{equation}\label{1.1}
V_n((1-\lambda)A_0\oplus\lambda A_1)^{\frac{1}{n}} \le (1-\lambda)V_n(A_0)^{\frac{1}{n}}+\lambda V_n(A_1)^{\frac{1}{n}}.
\end{equation}
Equality holds if and only if $A_0=\alpha A_1$ with some $\alpha>0$.
\end{theorem}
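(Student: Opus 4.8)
\medskip
\noindent\textit{Proof plan.}
The plan is to express the volume of a $C$-coconvex set as the $n$-th power of an $L^n$-norm of a ``radial function'', so that \eqref{1.1} becomes an instance of the triangle inequality in $L^n$. Since $\mu(C\setminus K)=C\setminus\mu K$ for $\mu>0$, we have $(1-\lambda)A_0\oplus\lambda A_1=C\setminus L$ with $L:=(1-\lambda)K_0+\lambda K_1$. I would first observe that a $C$-close set $K$ cannot contain the apex $o$: along any ray $\R_{\ge0}u$ with $u\in\Sn\cap C$ the set $\{s\ge0:su\in K\}$ is a closed interval, and if $o\in K$ it contains $0$; for $V_n(C\setminus K)$ to be finite this interval would then have to equal $[0,\infty)$ for $\Ha^{n-1}$-almost every $u$, which forces $K=C$ and $V_n(C\setminus K)=0$, a contradiction. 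Hence $o\notin K$, so for a.e.\ $u\in\Sn\cap C$ the interval $\{s\ge0:su\in K\}$ is a half-line $[\rho_K(u),\infty)$ with $\rho_K(u):=\inf\{s\ge0:su\in K\}\in(0,\infty)$, and passing to polar coordinates yields
\[
 V_n(C\setminus K)=\frac1n\int_{\Sn\cap C}\rho_K(u)^n\,\Ha^{n-1}(\D u).
\]
Writing $\nu$ for the measure $\tfrac1n\Ha^{n-1}$ on $\Sn\cap C$, this reads $V_n(C\setminus K)^{1/n}=\|\rho_K\|_{L^n(\nu)}$; in particular $\rho_{K_0},\rho_{K_1}\in L^n(\nu)$.

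The decisive point is the subadditivity of the radial function under co-addition. For every $u$ with $\rho_{K_i}(u)<\infty$ one has $\rho_{K_i}(u)\,u\in K_i$ (the defining interval is closed), hence $((1-\lambda)\rho_{K_0}(u)+\lambda\rho_{K_1}(u))u\in(1-\lambda)K_0+\lambda K_1=L$, so
\[
 \rho_L(u)\le(1-\lambda)\rho_{K_0}(u)+\lambda\rho_{K_1}(u)\qquad\text{for all }u\in\Sn\cap C
\]
(the bound being trivial where the right-hand side is infinite). In particular $\rho_L\in L^n(\nu)$, and one checks that $o\notin L$ and that $\{s\ge0:su\in L\}$ is a half-line for a.e.\ $u$, so the polar-coordinate formula applies to $L$ as well; thus $C\setminus L$ has finite volume (so $A_0\oplus A_1$ is again $C$-coconvex) and $V_n(C\setminus L)^{1/n}=\|\rho_L\|_{L^n(\nu)}$. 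Combining the pointwise bound with the triangle inequality for $\|\cdot\|_{L^n(\nu)}$,
\[
 V_n(C\setminus L)^{1/n}=\|\rho_L\|_{L^n(\nu)}\le\|(1-\lambda)\rho_{K_0}+\lambda\rho_{K_1}\|_{L^n(\nu)}\le(1-\lambda)\|\rho_{K_0}\|_{L^n(\nu)}+\lambda\|\rho_{K_1}\|_{L^n(\nu)},
\]
which is exactly \eqref{1.1}.

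For the equality case, $n=1$ is trivial (both sides of \eqref{1.1} always agree, and any two $C$-coconvex sets are dilates), so assume $n\ge2$. Equality in \eqref{1.1} forces equality in the triangle inequality for $\|\cdot\|_{L^n(\nu)}$; since $\rho_{K_0},\rho_{K_1}\ge0$ have the positive norms $V_n(A_0)^{1/n}$ and $V_n(A_1)^{1/n}$, this means $\rho_{K_0}=\alpha\rho_{K_1}=\rho_{\alpha K_1}$ $\nu$-a.e.\ for some $\alpha>0$. This must be upgraded to $K_0=\alpha K_1$, and here convexity enters: $K_0$ and $\alpha K_1$ are closed convex subsets of $C$ not containing $o$ which are full-dimensional (their complements in $C$ have finite volume while $V_n(C)=\infty$), so each is the closure of its interior; moreover, if a point $v$ lay in $\inn(\alpha K_1)\setminus K_0$, then $\rho_{\alpha K_1}<\rho_{K_0}$ would hold on a relatively open subset of $\Sn\cap C$ (those directions near $v/|v|$), contradicting a.e.\ equality. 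Hence $\inn(\alpha K_1)\subseteq K_0$ and, symmetrically, $\inn K_0\subseteq\alpha K_1$; taking closures gives $K_0=\alpha K_1$ and therefore $A_0=C\setminus K_0=\alpha(C\setminus K_1)=\alpha A_1$. Conversely, if $A_0=\alpha A_1$ then $(1-\lambda)K_0+\lambda K_1=((1-\lambda)\alpha+\lambda)K_1$, and the $n$-homogeneity $V_n(\mu A)=\mu^nV_n(A)$ turns \eqref{1.1} into an equality.

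The step I expect to cost the most effort is the polar-coordinate representation of the co-volume: controlling the null set of exceptional directions, the measurability of $\rho_K$, and the full-dimensionality of $C$-close sets all need care. Once that is in place the inequality is short, being essentially the $L^n$ triangle inequality (a coconvex analogue of the dual Brunn--Minkowski inequality for star bodies); the only remaining convex-geometric ingredient is the passage from a.e.\ equality of radial functions to equality of the sets, which uses that a full-dimensional closed convex set equals the closure of its interior.
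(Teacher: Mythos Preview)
Your proof is correct and takes a genuinely different route from the paper's. The paper adapts the classical Kneser--S\"uss slicing argument: it cuts with hyperplanes $H_\zeta$ orthogonal to a fixed direction $w$, reparametrizes so that the slices of $A_0^\bullet$ and $A_1^\bullet$ below a given level have equal volume, applies the $(n-1)$-dimensional Brunn--Minkowski inequality slice by slice, and then carefully passes to the limit $\tau\to\infty$ (the unboundedness of the sets forces a nontrivial asymptotic analysis). The equality case is traced back through all these steps to conclude that $A_1^\bullet$ is a translate of $A_0^\bullet$, and a final volume argument rules out nonzero translations.

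Your approach is far more direct: you observe that $C\setminus K$ is star-shaped at $o$ (a fact the paper also records), express its volume in polar coordinates as $\|\rho_K\|_{L^n(\nu)}^n$, note that Minkowski addition of the $K_i$ gives the pointwise bound $\rho_L\le(1-\lambda)\rho_{K_0}+\lambda\rho_{K_1}$, and finish with the triangle inequality in $L^n$. This is exactly the coconvex analogue of Lutwak's dual Brunn--Minkowski inequality for star bodies, and it bypasses the slicing entirely; the equality case drops out of the equality condition in Minkowski's integral inequality. What your approach buys is brevity and a much lighter prerequisite (H\"older/Minkowski in $L^n$ instead of the $(n-1)$-dimensional Brunn--Minkowski theorem). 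What the paper's approach buys is a closer structural parallel with the convex-body theory, consistent with its broader aim of building mixed volumes for coconvex sets.

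One small remark on your equality step: the claim that $v\in\inn(\alpha K_1)\setminus K_0$ forces $\rho_{\alpha K_1}<\rho_{K_0}$ on a relatively open set of directions is a little delicate as stated, since $\rho_{K_0}$ need not be continuous. A cleaner way to close this is to note that $\rho_{K_0}=\rho_{\alpha K_1}$ a.e.\ gives $\Ha^n(K_0\triangle\alpha K_1)=0$ directly from the polar formula; then $\inn(\alpha K_1)\setminus K_0$ is an open set of measure zero, hence empty, so $\inn(\alpha K_1)\subseteq K_0$, and taking closures (both sets being full-dimensional closed convex) finishes as you indicate.
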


The impetus for this investigation came from two papers by Khovanski\u{\i} and Timorin \cite{KT14} and by Milman and Rotem \cite{MR14}, who studied different aspects of complemented versions of classical inequalities (that is, versions for (relative) complements of convex (or more general) sets). Let $\Delta\subset C$ be a closed convex set such that $C\setminus \Delta$ is {\bf bounded} and nonempty. Khovanski\u{\i} and Timorin \cite{KT14} call the set $C\setminus(\Delta\cup\{o\})$ a {\em coconvex body}. (The non-inclusion of certain boundary points is relevant for some of their aims, but not if volumes are considered.) To indicate their motivation, the authors `briefly overview the connections of convex geometry with algebraic geometry, of algebraic geometry with singularity theory and, finally, of singularity theory with coconvex geometry' (citation). Then they go on to obtain complemented versions of the main inequalities of the classical Brunn--Minkowski theory. These include the Aleksandrov--Fenchel inequalities, the Brunn--Minkowski inequality, Minkowski's first and second inequality. The derivation of the Aleksandrov--Fenchel inequalities for coconvex bodies from those for convex bodies is brief and particularly elegant. 

When volumes of sets $C\setminus K$ are studied, it seems natural to admit convex sets $K$ for which $C\setminus K$ has finite volume, without necessarily being bounded. This is what we do here. Of course, the inequality (\ref{1.1}) could be obtained by approximation from the results in \cite{KT14}, but we do not see a possibility to get the equality condition in this way. This equality condition is crucial for a subsequent application. Our proof of (\ref{1.1}), which yields the equality condition, adapts the classical Kneser--S\"uss approach to the Brunn--Minkowski inequality for convex bodies, but needs extra steps, since we deal also with unbounded sets.
 
The second incentive, the work of Milman and Rotem \cite{MR14}, which was inspired by Borell's theory of convex measures, established a complemented Brunn--Minkowski inequality for complements of general sets, with Lebesgue measure replaced by more general measures, and deduced isoperimetric type inequalities. Again, inequality (\ref{1.1}) is a very special case, but it is not clear how to obtain equality conditions from their approach.

Our first application of the equality condition in Theorem 1 is a Minkowski type uniqueness theorem. In the development of the classical Brunn--Minkowski theory for convex bodies, some of the first steps are the introduction of mixed volumes, their integral representation, and consequences of the Brunn--Minkowski theorem, such as Minkowski's first and second inequality for mixed volumes. A first application then is the uniqueness result in the Minkowski problem concerning convex bodies with given surface area measures. We follow a similar line for $C$-coconvex sets or, what is equivalent but perhaps more convenient, for $C$-close sets. In particular, we prove a counterpart to Minkowski's uniqueness theorem. Let $K$ be a $C$-close set. Its area measure is defined as follows. Let $C^\circ$
be the polar cone of $C$. Denoting by $\Sn$ the unit sphere of $\R^n$, we define
$$ \Omega_C:= \Sn\cap {\rm int}\,C^\circ.$$
The spherical image $\sigma(K,\beta)$ of the closed convex set $K$ at the set $\beta$ is the set of all outer unit normal vectors of $K$ at points of $K\cap \beta$. For the $C$-close set $K$, we have $\sigma(K,{\rm int}\,C)\subseteq\Omega_C$, since a supporting hyperplane of $K$ at a point of ${\rm int}\,C\cap {\rm bd}\,K$ (where ${\rm bd}$ denotes the boundary) separates $K$ and the origin $o$. For $\omega\subseteq\Omega_C$, the reverse spherical image $\tau(K,\omega)$ is defined as the set of all points in ${\rm bd}\,K$ at which there exists an outer unit normal vector belonging to $\omega$. For Borel sets $\omega\subseteq\Omega_C$ one then defines
$$ S_{n-1}(K,\omega)= \Ha^{n-1}(\tau(K,\omega)),$$
where $\Ha^{n-1}$ is the $(n-1)$-dimensional Hausdorff measure (so that $S_{n-1}(K,\cdot)$ is the usual surface area measure, extended to closed convex sets). Using the theory of surface area measures of convex bodies (see \cite[Sect. 4.2]{Sch14}), it is easily seen that this defines a Borel measure on $\Omega_C$, the {\em surface area measure} $ S_{n-1}(K,\cdot)$ of $K$. In contrast to the case of convex bodies, the surface area measure of a $C$-close set is only defined on the open subset $\Omega_C$ of $\Sn$, and the total measure may be infinite. 

Now we can state a counterpart to Minkowski's uniqueness theorem. 

\begin{theorem}\label{T2}
If $K_0, K_1$ are $C$-close sets with $S_{n-1}(K_0,\cdot)= S_{n-1}(K_1,\cdot)$, then $K_0=K_1$.
\end{theorem}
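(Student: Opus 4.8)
The plan is to carry over, essentially verbatim, the classical proof of Minkowski's uniqueness theorem, using the reversed Brunn--Minkowski inequality of Theorem~\ref{T1} together with mixed volumes of $C$-coconvex sets and the integral representation of the first mixed volume. I would take as available from the preceding development the following facts: the function $\lambda\mapsto V_n((1-\lambda)A_0\oplus\lambda A_1)$ is a polynomial of degree $\le n$ on $[0,1]$, so that mixed volumes $V(A_0[n-j],A_1[j])$ are defined by
\[
V_n\big((1-\lambda)A_0\oplus\lambda A_1\big)=\sum_{j=0}^n\binom nj(1-\lambda)^{\,n-j}\lambda^{\,j}\,V(A_0[n-j],A_1[j]),
\]
with $V(A_0[n])=V_n(A_0)$ and $V(A_1[n])=V_n(A_1)$; and the integral representation
\[
V(A_0[n-1],A_1)=-\frac1n\int_{\Omega_C}h_{K_1}\,\D S_{n-1}(K_0,\cdot),
\]
where $h_{K_1}$ is the support function of $K_1$. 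Note that $h_{K_1}\le 0$ on $\Omega_C$, since $K_1\subseteq C$ and $\Omega_C\subseteq\inn C^\circ$, and that the special case $K_1=K_0$ reads $V_n(A_0)=-\frac1n\int_{\Omega_C}h_{K_0}\,\D S_{n-1}(K_0,\cdot)$. Establishing the integral representation --- with proper attention to the fact that $S_{n-1}(K_0,\cdot)$ may have infinite total mass and $h_{K_1}$ may be unbounded --- is, in my view, the one genuinely nontrivial ingredient; but in the situation of Theorem~\ref{T2} all integrals that occur are finite, being equal to $n$ times a finite coconvex volume.

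Granting this, the argument would run as follows. Put $A_i=C\setminus K_i$ and $S:=S_{n-1}(K_0,\cdot)=S_{n-1}(K_1,\cdot)$. Applying the integral representation and then the equality of the surface area measures,
\[
V(A_0[n-1],A_1)=-\frac1n\int_{\Omega_C}h_{K_1}\,\D S=-\frac1n\int_{\Omega_C}h_{K_1}\,\D S_{n-1}(K_1,\cdot)=V_n(A_1),
\]
and, by symmetry, $V(A_1[n-1],A_0)=V_n(A_0)$. Now set $g(\lambda):=V_n((1-\lambda)A_0\oplus\lambda A_1)$; it is a nonnegative polynomial on $[0,1]$ with $g(0)=V_n(A_0)>0$, and by Theorem~\ref{T1} (applied also to co-sums of intermediate dilates) the function $g^{1/n}$ is convex on $[0,1]$, hence lies above its tangent line at $\lambda=0$: $g(1)^{1/n}\ge g(0)^{1/n}+(g^{1/n})'(0)$. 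Since $g'(0)=n\big(V(A_0[n-1],A_1)-V_n(A_0)\big)=n\big(V_n(A_1)-V_n(A_0)\big)$ by a direct differentiation of the expansion, we get $(g^{1/n})'(0)=V_n(A_0)^{1/n-1}\big(V_n(A_1)-V_n(A_0)\big)$, and the tangent inequality becomes $V_n(A_1)^{1/n}\ge V_n(A_0)^{1/n-1}V_n(A_1)$. Dividing by $V_n(A_1)>0$ and using that $t\mapsto t^{1/n-1}$ is strictly decreasing on $(0,\infty)$, this yields $V_n(A_1)\le V_n(A_0)$; the symmetric argument (starting from $V(A_1[n-1],A_0)=V_n(A_0)$) gives $V_n(A_0)\le V_n(A_1)$. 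Hence $V_n(A_0)=V_n(A_1)$, and the tangent inequality above holds with equality.

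Equality in the tangent inequality for the convex function $g^{1/n}$ forces $g^{1/n}$ to coincide with its tangent line at $0$ throughout $[0,1]$, i.e.\ $V_n((1-\lambda)A_0\oplus\lambda A_1)^{1/n}=(1-\lambda)V_n(A_0)^{1/n}+\lambda V_n(A_1)^{1/n}$ for all $\lambda\in(0,1)$. By the equality condition in Theorem~\ref{T1} this means $A_0=\alpha A_1$ for some $\alpha>0$; then $V_n(A_0)=\alpha^n V_n(A_1)=\alpha^n V_n(A_0)$ forces $\alpha=1$, so $A_0=A_1$ and therefore $K_0=K_1$. Apart from the integral representation flagged above, the only points needing a word of care are the differentiability of $g^{1/n}$ at $0$ (immediate from polynomiality and $g(0)>0$) and the identity for $g'(0)$; both are routine.
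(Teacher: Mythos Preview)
Your proof is correct and follows essentially the same route as the paper: both rely on the polynomial expansion of $V_n((1-\lambda)A_0\oplus\lambda A_1)$, the integral representation of the first mixed volume (the paper's (\ref{6.5})), the convexity of $\lambda\mapsto V_n(A_\lambda)^{1/n}$ (equivalently, the reversed Minkowski first inequality (\ref{8})), and the equality condition in Theorem~\ref{T1}. The only cosmetic difference is in how equality is extracted: the paper multiplies the two instances of (\ref{8}) to force equality there, while you use the tangent-line inequality at $\lambda=0$ together with the symmetric one at $\lambda=1$ to get $V_n(A_0)=V_n(A_1)$ and then linearity of $g^{1/n}$; these are two phrasings of the same convexity argument, and your deduction of $\alpha=1$ from equal volumes is equivalent to the paper's deduction from homogeneity of $\overline S_{n-1}$.
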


The fact that $C\setminus K_i$ ($i=0,1$) has finite volume, is crucial for the proof. We do, however, not know whether it is essential for the theorem. In other words, does the uniqueness still hold if the condition that $C\setminus K_i$ has finite volume is replaced by the condition that $K_i$ is only `asymptotic' to $C$, in the sense that the distance of the boundaries of $C$ and $K_i$ outside $B(o,r)$ (ball with center $o$ and radius $r$) tends to zero, as $r\to\infty$?

In the theory of convex bodies, Minkowski's existence theorem is one of the fundamental results and is still finding constant interest. For Minkowski's classical theorem and its extension by Fenchel, Jessen and Aleksandrov, we refer to \cite[Sect. 8.2]{Sch14}, where the Notes describe later developments. Information on recent variants, such as those in the $L_p$ Brunn--Minkowski theory, can be found in Section 9.2 of \cite{Sch14} and its Notes. A counterpart to Minkowski's existence problem for coconvex sets would certainly be of interest. We formulate this question as follows. What are the necessary and sufficient conditions on a Borel measure $\mu$ on $\Omega_C$ such that there exists a $C$-close set $K$ with $S_{n-1}(K,\cdot)=\mu$? We can only give a special sufficient condition, leading to coconvex sets as considered by Khovanski\u{\i} and Timorin. We say that the closed convex set $K\subset C$ is $C$-{\em full} if $C\setminus K$ is bounded. 

\begin{theorem}\label{T3}
Every nonzero, finite Borel measure on $\Omega_C$, which is concentrated on a compact subset of $\Omega_C$, is the surface area measure of a $C$-full set.
\end{theorem}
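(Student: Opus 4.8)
The plan is to reduce Theorem~\ref{T3} to the classical Minkowski existence theorem for convex bodies by a compactification/truncation argument. Since $\mu$ is concentrated on a compact set $\omega_0\subset\Omega_C$, there is a closed spherical cap $\Omega'$ with $\omega_0\subset\Omega'\subset\inn\Omega_C$ whose bounding ``great subsphere'' has all its directions in $\inn C^\circ$; in particular every $u\in\Omega'$ makes a strictly positive ``angle'' with $\bd C$. The first step is to enlarge $\mu$ to a measure $\widehat\mu$ on all of $\Sn$ whose centroid is at the origin: add to $\mu$ a suitable measure supported on $-\Omega'\cup(\text{directions normal to }\bd C)$, i.e. on the set $\U$ of outer normals that a $C$-full body may legitimately have on the part of its boundary lying in $\bd C$. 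More precisely, a $C$-full set $K$ has $\bd K=(\bd K\cap\inn C)\cup(\bd K\cap\bd C)$, and on the second piece $K$ has the same ``facets'' as $C$ truncated by a large ball; the outward normals there range over $\Sn\setminus\inn C^\circ$ minus the apex directions. So I would look for a convex body $M=K\cap B$ (with $B$ a large ball, or better a large simplex-like truncation of $C$) whose surface area measure restricted to $\Omega_C$ equals $\mu$ and which agrees with $C$ outside a large ball.

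Concretely, here is the construction I would carry out. Fix a large $R$ and let $C_R$ be the truncation of $C$ obtained by intersecting $C$ with a halfspace $H_R^-=\{x:\langle x,v\rangle\le R\}$ for a fixed $v\in\inn C$; then $C_R$ is a convex body, and its surface area measure $S_{n-1}(C_R,\cdot)$ consists of a part carried by $\Sn\setminus\inn C^\circ$ (from $\bd C$, independent of $R$ up to the one extra facet) plus the point-mass-like contribution $R^{n-1}\kappa\,\delta_v$-type term from the cap $\{\langle x,v\rangle=R\}\cap C$. The idea is to solve Minkowski's problem for the measure $\mu+\nu_R$, where $\nu_R$ is the ``complementary'' measure on $\Sn\setminus\Omega'$ chosen so that (i) $\mu+\nu_R$ is not concentrated on a great subsphere and (ii) its centroid is $o$; this is possible for all large $R$ because $\mu$ is finite and nonzero and one has the freedom to place mass in the open region of normals pointing ``into'' $-C$. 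Minkowski's theorem then yields a convex body $P_R$ with $S_{n-1}(P_R,\cdot)=\mu+\nu_R$. The crucial point is to choose $\nu_R$ so cleverly that $P_R$ is forced to coincide with $C$ outside a large ball: by construction $P_R$ has exactly the same normals with exactly the same measures as $C_R$ on $\Sn\setminus\inn C^\circ$, so (after a translation) the facet structure of $P_R$ on $\bd C$-directions matches that of $C_R$; since a convex body is determined by its surface area measure, and $C_R$ itself has surface area measure $\mu_{C_R}$, I would instead directly engineer $\nu_R:=S_{n-1}(C_R,\cdot)\restriction(\Sn\setminus\Omega')$ minus a correction, and solve for the body with surface area measure $\mu\restriction\Omega' + S_{n-1}(C_R,\cdot)\restriction(\Sn\setminus\Omega')$; uniqueness (Theorem~\ref{T2}-type reasoning, or classical Minkowski uniqueness) then pins down that the resulting $P_R$ agrees with $C$ in the unbounded directions, so $K:=P_R\cap C$ has $C\setminus K$ bounded, i.e. $K$ is $C$-full, with $S_{n-1}(K,\cdot)=\mu$.

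For the centroid condition, I would note that $\int_{\Sn} u\,\D S_{n-1}(P,u)=o$ holds automatically for the surface area measure of any convex body (divergence theorem), so the real task is only to verify that the candidate measure $\mu+\nu_R$ on $\Sn$ satisfies this; but $\nu_R$ is, by construction, essentially the surface-area contribution of the lateral boundary $\bd C$ together with the truncating facet, so $\mu+\nu_R$ differs from $S_{n-1}(C_R,\cdot)$ only on $\Omega'$, where it equals $\mu$ instead of $S_{n-1}(C_R,\cdot)\restriction\Omega'$. One then adjusts the single truncating facet at $\{\langle x,v\rangle=R\}$ (its size is a free parameter $R$, or one may tilt it) to absorb the vector discrepancy $\int_{\Omega'}u\,\D\mu(u)-\int_{\Omega'}u\,\D S_{n-1}(C_R,u)$; since this discrepancy is a fixed vector while the facet contribution grows like $R^{n-1}$ in a controlled direction, for large $R$ one can also add a small auxiliary facet transverse to $v$ to cancel the component orthogonal to $v$. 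Finally one checks the nondegeneracy hypothesis of Minkowski's theorem: $\mu+\nu_R$ is not concentrated on any great subsphere, which is clear since $\nu_R$ already contains the full lateral contribution of a pointed cone with interior, whose normals span $\R^n$.

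The main obstacle I anticipate is precisely this bookkeeping with the centroid and the truncating facet: one must choose the truncation of $C$ (the halfspace, or a pair of halfspaces) so that the resulting body from Minkowski's theorem is guaranteed to have the prescribed normals \emph{only} in $\Omega_C$ with measure exactly $\mu$ and nowhere else inside $\Omega_C$, and so that it genuinely closes up to match $C$ at infinity rather than producing some other convex body with the same surface area measure in $\Omega_C$ but different behavior outside. Establishing that the solution body $P_R$ must contain a translate of $C\cap H_R^-$ as its ``unbounded part'' — equivalently, that its recession cone is $C$ — is where the pointedness of $C$ and the compact support of $\mu$ inside $\Omega_C$ (bounded away from $\bd\Omega_C=\Sn\cap\bd C^\circ$) will be used decisively, via a separation/supporting-hyperplane argument showing the normals in $\Sn\setminus\inn C^\circ$ force the corresponding faces to lie along $\bd C$.
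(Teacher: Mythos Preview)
Your strategy is entirely different from the paper's. The paper does not reduce to the classical Minkowski theorem at all; instead it runs a direct variational argument inside the cone. One defines, for each positive continuous $f$ on the compact set $\omega$, the coconvex Wulff shape $K_f=C\cap\bigcap_{u\in\omega}H^-(u,-f(u))$, sets $V(f)=V_n(C\setminus K_f)$, and maximizes the scale-invariant functional $\Phi(f)=V(f)^{-1/n}\int_\omega f\,\D\varphi$. A compactness lemma (Lemma~\ref{L7}) guarantees a maximizer $-h_{K_0}$ among support functions, and an adapted Aleksandrov variational lemma (Lemma~\ref{L7.5.3}) computes the derivative of $V$ along perturbations; setting the derivative of $\Phi$ to zero at the maximum forces $\varphi=\lambda S_{n-1}(K_0,\cdot)$ for some $\lambda>0$.

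Your reduction has a genuine gap, and it is precisely the obstacle you name at the end but do not resolve. For a $C$-full set $K$, the surface area carried by the cone-boundary directions $\Sn\cap\bd C^\circ$ is \emph{not a free parameter}: it is determined by where $\bd K\cap\inn C$ meets $\bd C$, hence by the unknown $K$ itself. Prescribing it in advance as part of $\nu_R$ is circular. Concretely, suppose $C$ is polyhedral with facet normals $v_1,\dots,v_m\in\Sn\cap\bd C^\circ$. After solving the classical Minkowski problem you obtain a body $P_R$, determined only up to translation, whose $v_i$-facets have the prescribed $(n-1)$-volumes. Fitting $P_R$ into $C$ so that each $v_i$-facet lies in the hyperplane $\{\langle\cdot,v_i\rangle=0\}$ requires a translation $t$ with $\langle t,v_i\rangle=-h(P_R,v_i)$ for every $i$; this is $m$ linear equations in $n$ unknowns, over-determined as soon as $m>n$, and for a smooth cone the constraint becomes a continuum. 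Matching the surface area measure on the $v_i$ fixes only the \emph{areas} of those facets, not their \emph{positions}; your proposed ``separation/supporting-hyperplane argument'' cannot bridge this, because two convex bodies with the same area measure on a subset of directions need not have congruent or even parallel face arrangements there. The auxiliary truncating facets you introduce to repair the centroid only add further positional constraints to be satisfied simultaneously.
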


Thus, no further conditions are required. The uniqueness is covered by Theorem \ref{T2}.

Conversely, however, it should be observed that there do exist $C$-full sets for which the surface area measure is not concentrated on a compact subset of $\Omega_C$. Moreover, there are $C$-close sets $K$ for which the surface area measure is finite, but $C\setminus K$ is not bounded. This points to some of the difficulties that are inherent to the general Minkowski existence problem for $C$-close sets. 

A completely satisfactory existence theorem can be proved if the surface area measure is replaced by the cone-volume measure. Recall that in the logarithmic Minkowski problem (see \cite{BLYZ13}), which is the case $p=0$ of the $L_p$ Minkowski problem, the role of the surface area measure is taken over by the cone-volume measure. This can also be defined for a $C$-close set $K$, as follows. For each point $x\in{\rm bd}\,K\cap  {\rm int}\,C$, the half-open line-segment $[o,x)$ with endpoints $o$ (the origin) and $x$ (excluded) is contained in $C\setminus K$ (see Sect. \ref{sec2} below). For a Borel set $\omega\subseteq\Omega_C$, let $V_K(\omega)$ denote the Lebesgue measure of the set
$$ \bigcup_{x\in\tau(K,\omega)} [o,x).$$
As shown in Section \ref{sec10}, this can be represented by
$$ V_K(\omega) = \frac{1}{n} \int_\omega -h_K(u)\,S_{n-1}(K,\D u),$$
where $h_K$ denotes the support function of $K$. $V_K$ is a measure on $\Omega_C$ and is called the {\em cone-volume measure} of $K$.

Cone-volume measures of convex bodies have been studied thoroughly during the last years; we mention here only \cite{Sta02, HSW05, Nao07, Sta08, Xio10, PW12, BLYZ13, HL14, Zhu14, BH15, BHZ16, BHk16, BHk17}. It is known that they have to satisfy some highly non-trivial properties. In view of this, Theorem \ref{T5} below may seem rather surprising, at first sight. First we show: 

\begin{theorem}\label{T4}
Every nonzero, finite Borel measure on $\Omega_C$, which is concentrated on a compact subset of $\Omega_C$, is the cone-volume measure of a $C$-full set.
\end{theorem}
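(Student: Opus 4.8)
The plan is to settle the discrete case by a variational argument and then reach the general case by approximation and compactness, in analogy with the convex-body treatment of the classical and logarithmic Minkowski problems. The essential point is that here the cone $C$ plays the role taken in the convex-body case by the subspace concentration condition, which is why no further hypothesis on $\mu$ is needed.

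\textbf{Discrete case.} Assume first $\mu=\sum_{i=1}^m c_i\delta_{u_i}$ with $c_i>0$ and distinct $u_1,\dots,u_m\in\Omega_C$. For $\tau=(\tau_1,\dots,\tau_m)\in(-\infty,0)^m$ I would set
\[
 K(\tau)=\bigcap_{i=1}^m\{x\in C:\langle x,u_i\rangle\le\tau_i\},\qquad
 \overline V(\tau)=V_n(C\setminus K(\tau)).
\]
Since $u_i\in\inn C^\circ$ there are $\delta_i>0$ with $\langle x,u_i\rangle\le-\delta_i|x|$ for $x\in C$; hence $C\setminus K(\tau)$ is bounded, $K(\tau)$ is $C$-full, and $0<\overline V(\tau)<\infty$. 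A first-variation computation (moving the $i$-th facet outward by $\epsilon$ removes a prism of volume $\epsilon\,S_{n-1}(K(\tau),\{u_i\})+o(\epsilon)$ from the coconvex body) shows that $\overline V$ is $C^1$ on $(-\infty,0)^m$ with $\partial\overline V/\partial\tau_i=-S_{n-1}(K(\tau),\{u_i\})$. Together with the integral representation $V_{K(\tau)}(\{u_i\})=\tfrac1n(-h_{K(\tau)}(u_i))S_{n-1}(K(\tau),\{u_i\})$ and $h_{K(\tau)}(u_i)=\tau_i$, this gives, in the coordinates $s_i=\log(-\tau_i)$,
\[
 \frac{\partial\overline V}{\partial s_i}=n\,V_{K(\tau)}(\{u_i\}).
\]
I would then maximize $L(s)=\sum_i c_i s_i=\int_{\Omega_C}\log(-h_{K(\tau)})\,\D\mu$ over $\cS=\{s:\overline V(s)=1\}$. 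Two estimates make this go through. From $\overline V(\tau)\ge V_n(\{x\in C:\langle x,u_i\rangle>\tau_i\})=(-\tau_i)^n v_i$ with $v_i=V_n(\{x\in C:\langle x,u_i\rangle>-1\})\in(0,\infty)$ one gets $s_i\le-\tfrac1n\log v_i$ on $\cS$, so $L$ is bounded above; and the constant configuration (all $\tau_i$ equal, normalized to $\overline V=1$) lies in $\cS$ with finite $L$, so $\sup_{\cS}L$ is finite. Hence a maximizing sequence $s^{(k)}$ has no coordinate tending to $-\infty$ (that would force $L(s^{(k)})\to-\infty$) and, being bounded above, subconverges to some $s^*$ with all coordinates finite and $\overline V(s^*)=1$. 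The homogeneity $\overline V(s+t\mathbf 1)=e^{nt}\overline V(s)$ gives $\langle\grad\overline V(s^*),\mathbf 1\rangle=n>0$, so $\grad\overline V(s^*)\neq0$ and the Lagrange relation $c=\lambda\,\grad\overline V(s^*)$ holds; dotting with $\mathbf 1$ gives $\lambda=\tfrac1n\mu(\Omega_C)>0$. Therefore $V_{K(\tau^*)}(\{u_i\})=c_i/\mu(\Omega_C)$, i.e.\ $V_{K(\tau^*)}=\mu/\mu(\Omega_C)$, and since $V_{\rho K}=\rho^n V_K$, the dilate $K=\mu(\Omega_C)^{1/n}K(\tau^*)$ is a $C$-full set with $V_K=\mu$. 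The Lagrange relation also forces $S_{n-1}(K(\tau^*),\{u_i\})>0$ for each $i$, so all the $u_i$ are genuine facet normals and no combinatorial degeneracy occurs.

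\textbf{General case.} Given $\mu$ concentrated on a compact $\omega_0\subset\Omega_C$, choose discrete $\mu_k$ concentrated on finite subsets of $\omega_0$ with $\mu_k(\Omega_C)=\mu(\Omega_C)$ and $\mu_k\to\mu$ weakly, and let $K_k$ be the $C$-full sets with $V_{K_k}=\mu_k$ from the discrete case. Since $\omega_0$ is compact in $\inn C^\circ$, there are $\delta>0$ and $v_0>0$ with $\langle x,u\rangle\le-\delta|x|$ and $V_n(\{x\in C:\langle x,u\rangle>-1\})\ge v_0$ for all $x\in C,\ u\in\omega_0$; hence every height satisfies $(-h_{K_k}(u))^n v_0\le V_n(C\setminus K_k)=\mu(\Omega_C)$, so the coconvex bodies $C\setminus K_k$ all lie in a fixed ball. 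A selection argument gives $C\setminus K_k\to C\setminus K$ (Hausdorff) along a subsequence, with $K$ a $C$-full set because $V_n(C\setminus K)=\mu(\Omega_C)>0$; and since all the measures live on the fixed compact $\omega_0$, no mass escapes to $\bd C^\circ$, so $V_{K_k}\to V_K$ weakly and $V_K=\mu$. Uniqueness, where relevant, follows from Theorem \ref{T2}.

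\textbf{Main obstacle.} I expect the delicate points to be the regularity input in the discrete step---that $\overline V$ is genuinely $C^1$ on $(-\infty,0)^m$ (continuity of facet areas in $\tau$, including through degenerations), so that the Lagrange argument is legitimate---together with the two coercivity estimates that keep a maximizing sequence away from the boundary of $(-\infty,0)^m$; and, in the limit step, the continuity of the cone-volume measure under Hausdorff convergence of $C$-close sets, which uses exactly the hypothesis that $\mu$, hence the supports of all approximants, is carried by a compact subset of $\Omega_C$, preventing leakage of mass to $\bd C^\circ$. The remaining arguments are routine transcriptions of the convex-body case.
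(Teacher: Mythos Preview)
Your approach is correct but takes a genuinely different route from the paper. The paper does \emph{not} pass through a discrete case followed by approximation; instead it runs the variational argument once, directly on the space $\C(\omega)$ of all positive continuous functions on the compact support $\omega$. Concretely, it maximizes the functional
\[
\Phi(f)=V(f)^{-1/n}\exp\int_\omega \log f\,\D\varphi
\]
over $\C(\omega)$, shows (via the compactness argument of Lemma~\ref{L7} and Lemma~\ref{L3.2}) that the maximum is attained at $-h_{K_0}$ for some $K_0\in\K(C,\omega)$ with $V_n(C\setminus K_0)=1$, and then computes the first variation along $G(\tau,\cdot)=-h_{K_0}e^{\tau f}$ using the Aleksandrov-type variational Lemma~\ref{L7.5.3} for coconvex Wulff shapes. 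Setting the derivative to zero gives $\int_\omega f\,\D V_{K_0}=\int_\omega f\,\D\varphi$ for every $f\in\C(\omega)$, hence $V_{K_0}=\varphi$.

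Your discrete optimization is exactly the finite-dimensional shadow of the paper's functional $\Phi$ (take logs and restrict to functions on finitely many points), and your Lagrange relation is the discrete analogue of Lemma~\ref{L7.5.3}. What the paper buys with its Wulff-shape machinery is that the approximation step, and hence the weak-continuity argument for cone-volume measures that you flag as the main obstacle, becomes unnecessary for Theorem~\ref{T4} itself. What your approach buys is that the first-variation input is elementary finite-dimensional calculus rather than Lemma~\ref{L7.5.3}; and in fact your compactness/weak-continuity step is close in spirit to what the paper does later, in the proof of Theorem~\ref{T5} (Section~\ref{sec12}), where it approximates a general measure by compactly supported ones and uses the weak continuity established in Section~\ref{sec10}. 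One small technical remark: in your selection step you should work with the convex bodies $K_k\cap C_t$ (for a fixed $t$ large enough, guaranteed by your uniform height bound) rather than with the nonconvex sets $C\setminus K_k$; this is how the paper handles convergence (Definition~\ref{D2.1}).
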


From this result, we can deduce the following theorem, which now deals with $C$-close sets.

\begin{theorem}\label{T5}
Every nonzero, finite Borel measure on $\Omega_C$ is the cone-volume measure of a $C$-close set.
\end{theorem}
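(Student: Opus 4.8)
The plan is to reduce the general case to Theorem \ref{T4} by an exhaustion argument. Let $\mu$ be a nonzero finite Borel measure on $\Omega_C$. Since $\Omega_C=\Sn\cap\inn C^\circ$ is an open subset of $\Sn$, we may choose an increasing sequence of compact sets $\omega_1\subseteq\omega_2\subseteq\cdots$ with $\bigcup_k\omega_k=\Omega_C$ (for instance, the sets where the spherical distance to the relative boundary of $\Omega_C$ is at least $1/k$ and which lie in a ball of radius $k$), and set $\mu_k:=\mu\restriction\omega_k$. Each $\mu_k$ is a finite Borel measure concentrated on a compact subset of $\Omega_C$; discarding finitely many initial terms we may assume $\mu_k\neq 0$. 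By Theorem \ref{T4}, for each $k$ there is a $C$-full set $K_k$ with $V_{K_k}=\mu_k$. The total cone-volume $V_{K_k}(\Omega_C)=\mu_k(\Omega_C)$ is nondecreasing and bounded above by $\mu(\Omega_C)<\infty$, so $V_n(C\setminus K_k)=n^{-1}\mu_k(\Omega_C)$ stays bounded; this is the quantity that must be controlled in order to pass to a limit.

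Next I would argue that the sets $K_k$ can be chosen increasing, $K_1\subseteq K_2\subseteq\cdots$, or at least that they have a common lower bound forcing convergence. The natural device is the counterpart of Minkowski's inequality for mixed volumes in the coconvex setting (a consequence, via the integral representation of mixed volumes promised in the abstract, of Theorem \ref{T1} and its equality case). Since $\mu_k\le\mu_{k+1}$ as measures, one expects a monotonicity principle: if $S_{n-1}(K,\cdot)\le S_{n-1}(L,\cdot)$ then $K\supseteq L$ — the coconvex analogue of the fact that a larger surface area measure means a "smaller complement". Applying this (with cone-volume replaced by surface area measures, or a direct variational comparison) gives $K_k\supseteq K_{k+1}$, hence a decreasing sequence of $C$-close sets whose complements have bounded volume. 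Set $K:=\cl\bigcap_k K_k$; since each $K_k\subseteq C$ is closed and convex and the $K_k$ decrease, $K$ is closed, convex, contained in $C$, and $C\setminus K=\bigcup_k(C\setminus K_k)$ has volume $\le\sup_k n^{-1}\mu_k(\Omega_C)=n^{-1}\mu(\Omega_C)<\infty$, so $K$ is $C$-close (it is nonempty and $C\setminus K$ is nonempty because $\mu\ne 0$).

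Finally I would verify $V_K=\mu$. For a Borel set $\omega\subseteq\Omega_C$ with $\overline\omega$ a compact subset of some $\omega_{k_0}$, the reverse spherical images $\tau(K_k,\omega)$ converge (in the Hausdorff sense, or monotonically) to $\tau(K,\omega)$, and the solids $\bigcup_{x\in\tau(K_k,\omega)}[o,x)$ increase to $\bigcup_{x\in\tau(K,\omega)}[o,x)$; by monotone convergence of Lebesgue measure, $V_K(\omega)=\lim_k V_{K_k}(\omega)=\lim_k\mu_k(\omega)=\mu(\omega)$. Since such sets $\omega$ generate the Borel $\sigma$-algebra of $\Omega_C$ and both $V_K$ and $\mu$ are measures (the former by the integral representation $V_K(\omega)=n^{-1}\int_\omega-h_K\,\D S_{n-1}(K,\cdot)$, which also shows it is $\sigma$-additive), a standard uniqueness-of-measures argument gives $V_K=\mu$ on all of $\Omega_C$.

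The main obstacle I anticipate is establishing the monotonicity/convergence of the approximating sets $K_k$ rigorously: Theorem \ref{T4} is an existence statement and gives no control on the shape or location of $K_k$, so without a comparison principle the $K_k$ could "wander off" and fail to converge. The cleanest route is probably to prove the needed coconvex monotonicity lemma (larger surface area measure $\Rightarrow$ nested complement) directly from the complemented Minkowski first inequality, which itself follows from Theorem \ref{T1}; continuity of $h_{K_k}$ and of the reverse spherical image on compact subsets of $\Omega_C$ — where the $C$-close sets stay uniformly bounded away from $\bd C$ at infinity — then delivers the limit. A secondary technical point is ensuring $C\setminus K\ne\emptyset$, i.e. that the complements do not shrink to measure zero; this is guaranteed by $\mu_k(\Omega_C)\to\mu(\Omega_C)>0$ together with the lower semicontinuity of volume under the decreasing limit.
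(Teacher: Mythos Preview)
Your overall strategy---exhaust $\Omega_C$ by compacta, apply Theorem \ref{T4} to the restrictions $\mu_k$, and pass to a limit---is exactly the paper's. The decisive divergence is in how you obtain convergence of the $K_k$, and that is where the proposal has a genuine gap.

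You want to force $K_1\supseteq K_2\supseteq\cdots$ via a comparison principle of the type ``$S_{n-1}(K,\cdot)\le S_{n-1}(L,\cdot)\Rightarrow K\supseteq L$'', to be extracted from the coconvex Minkowski first inequality. Two problems. First, what you actually control is the \emph{cone-volume} measures $V_{K_k}=\mu_k$, not the surface area measures; since $V_K=\tfrac{1}{n}(-h_K)\,S_{n-1}(K,\cdot)$ carries the unknown support function as density, $\mu_k\le\mu_{k+1}$ does not give $S_{n-1}(K_k,\cdot)\le S_{n-1}(K_{k+1},\cdot)$. Second, even for surface area measures such a monotonicity statement is not a consequence of (\ref{8}); and for cone-volume measures the paper explicitly leaves \emph{uniqueness} open, so no comparison principle is available. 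Theorem \ref{T4} is bare existence with no canonical choice of $K_k$, so nothing prevents the $K_k$ from oscillating. (A side remark: $V_n(C\setminus K_k)=V_{K_k}(\Omega_C)=\mu_k(\Omega_C)$, without the factor $n^{-1}$.)

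The paper circumvents this entirely by compactness. The volume bound $V_n(C\setminus K_j)\le\varphi(\Omega_C)$ forces every $K_j$ to meet a fixed $C_t$; Blaschke selection applied to $K_j\cap C_{t_m}$ for a sequence $t_m\uparrow\infty$, combined with a diagonal argument, yields a subsequence converging (in the sense of Definition \ref{D2.1}) to a closed convex $M\subset C$. The identification $V_M=\varphi$ is then obtained from the \emph{weak continuity} of the cone-volume measure proved in Section \ref{sec10}, via the Portmanteau inequalities on open and closed subsets of each $\omega_j$. No monotonicity or comparison is needed---only the a priori volume bound, subsequential compactness, and weak continuity.
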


In the case of cone-volume measures of $C$-full sets or $C$-close sets, the uniqueness question remains open.

After some preparations in Section \ref{sec2}, we prove Theorem \ref{T1} in Section \ref{sec3}. An integral representation for the volume of $C$-coconvex sets is proved in Section \ref{sec4}. This is used in Section \ref{sec5} to introduce mixed volumes of bounded coconvex sets, and in Section \ref{sec6} these mixed volumes are extended to general $C$-coconvex sets. After this, everything is available to prove Theorem \ref{T2}, in Section \ref{sec7}. To prepare the variational proof of Theorem \ref{T3}, we introduce coconvex Wulff shapes in Section \ref{sec8}. Then Theorem \ref{T3} can be proved in Section \ref{sec9}. The remaining three sections deal with cone-volume measures of $C$-close sets and the proofs of  Theorems \ref{T4} and \ref{T5}.

\section{Notation and Preliminaries}\label{sec2}

We fix some notation, and collect what has already been introduced. We work in the $n$-dimensional Euclidean space $\R^n$ ($n\ge 2$), with scalar product $\langle\cdot\,,\cdot\rangle$ and induced norm $\|\cdot\|$. The unit sphere of $\R^n$ is $\Sn:=\{x\in\R^n:\|x\|=1\}$. We use the $k$-dimensional Hausdorff measure $\Ha^k$ on $\Rn$, for $k=n$, which on Lebesgue measurable sets coincides with Lebesgue measure, and for $k=n-1$. For convex bodies or $C$-coconvex sets, the Lebesgue measure, then called the volume, is denoted by $V_n$. Clearly, the volume of a $C$-coconvex set is homogeneous of degree $n$, that is,
$$ V_n(\lambda A)=\lambda^n V_n(A)\quad\mbox{for }\lambda\ge 0.$$

We write hyperplanes and closed halfspaces in the form
\begin{eqnarray*} 
H(u,t) &:=& \{x\in\Rn: \langle u,x\rangle =t\},\\  
H^-(u,t) &:=& \{x\in\Rn: \langle u,x\rangle \le t\},\\
H^+(u,t) &:=& \{x\in\Rn: \langle u,x\rangle \ge t\},
\end{eqnarray*}
with $u\in\Sn$ and $t\in\R$. 

The pointed, closed, convex cone $C\subset\Rn$ with interior points will be kept fixed in the following. Its polar cone is defined by 
$$C^\circ=\{x\in\Rn: \langle x,y\rangle\le 0\mbox{ for all }y\in C\}.$$ 
The set 
$$\Omega_C:= \Sn\cap{\rm int}\,C^\circ$$ 
is an open subset of the unit sphere $\Sn$. The vectors $u\in\Omega_C$ are precisely the unit vectors for which $H(u,0)\cap C =\{o\}$ and, therefore, precisely the unit vectors for which $H^+(u,t)\cap C$ is bounded for $t<0$.

Since the cone $C$ is pointed, we can choose a unit vector $w$ such that $\langle x,w\rangle >0$ for all $x \in C\setminus\{o\}$. The vector $w$ will be fixed; therefore it does not appear in the notation used below. We define the hyperplanes
$$ H_t:= \{x \in \R^n: \langle x,w \rangle =t\}$$
and the closed halfspaces
$$ H^-_t:= \{x \in \R^n: \langle x,w \rangle \le t\},$$
for $t\ge0$. For a subset $M\subseteq C$, we define
$$ M_t:= M \cap H^-_t$$
for $t>0$; thus, $M_t$ is always bounded.

As already mentioned, a set $K$ is called {\em $C$-close} if $K$ is a closed convex subset of $C$ with the property that $C\setminus K$ has positive finite volume. In this case, $C\setminus K$ is called a $C$-coconvex set. A set $K$ is called {\em $C$-full} if $K$ is a closed convex subset of $C$ with the property that $C\setminus K$ is bounded. 

We remark that a $C$-coconvex set $A$ has the property that its boundary inside ${\rm int}\,C$ `can be seen from the origin'. In other words, every ray with endpoint $o$ and passing through an interior point of $C$ meets the boundary of $A$ precisely once. This follows easily from the finiteness of the volume of $A$.

Convergence of $C$-close sets is defined via convergence of compact sets with respect to the Hausdorff metric.

\begin{definition}\label{D2.1}
If $K_j$, $j\in\N_0$, are $C$-close sets, we write
$$ K_j\to K_0 $$
(and say that $(K_j)_{j\in\N}$ converges to $K_0$) if there exists $t_0>0$ such that $K_j\cap C_{t_0}\not=\emptyset$ for all $j\in\N$, and 
$$ \lim_{j\to\infty} (K_j\cap C_t) = K_0\cap C_t \quad\mbox{for all }t\ge t_0,$$
where this means the ordinary convergence of convex bodies with respect to the Hausdorff metric. 
\end{definition}

The support function of a $C$-close set $K$ is defined by
$$h(K,x) = \sup\{\langle x,y\rangle: y\in K\} \quad \mbox{for } x\in {\rm int}\,C^\circ.$$
When convenient, we write $h(K,\cdot)=h_K$. It is easy to see that the supremum is attained, that $h(K,\cdot)$ determines $K$ uniquely, namely by
$$ K= C\cap\bigcap_{u\in\Omega_C} H^-(u,h(K,u)),$$
and that $h(K,\cdot)<0$. For $u\in\Omega_C$, the closed halfspace
$$ H^-(K,u)= \{x\in\Rn: \langle x,u\rangle \le h(K,u)\}$$
is the supporting halfspace of $K$ with outer unit normal vector $u$.

A $C$-coconvex set $A$ and the $C$-close set $C\setminus A$ determine each other uniquely. To make the correspondence more evident, we shall in the following often write $C\setminus A:= A^\bullet$. Thus, sets $A^\bullet$ are always closed and convex, and sets $(A^\bullet)_t$ are in addition bounded.

\section{Proof of Theorem \ref{T1}}\label{sec3}

The following proof of Theorem \ref{T1} has elements from the Kneser--S\"uss proof of the classical Brunn--Minkowski inequality (see, e.g., \cite[pp. 370--371]{Sch14}). 

Let $A_0,A_1$ be $C$-coconvex sets. First we assume that
\begin{equation}\label{1a} 
V_n(A_0)=V_n(A_1)=1.
\end{equation}
Let $0<\lambda<1$ and define 
$$ A_\lambda^\bullet := (1-\lambda)A_0^\bullet +\lambda A_1^\bullet,\qquad A_\lambda:= C\setminus A_\lambda^\bullet =(1-\lambda)A_0\oplus \lambda A_1.$$
In the following, $\nu\in\{0,1\}$. We write
$$ v_\nu(\zeta):= V_{n-1}(A^\bullet_\nu  \cap H_\zeta),\qquad w_\nu(\zeta):= V_n(\A^\bullet_\nu\cap H^-_\zeta)$$
for $\zeta\ge 0$, thus
$$ w_\nu(\zeta)=\int_{\alpha_\nu}^\zeta v_\nu(s)\,\D s,$$
where $\alpha_\nu$ is the number for which $H_{\alpha_\nu}$ supports $A_\nu^\bullet$. On $(\alpha_\nu,\infty)$, the function $v_\nu$ is continuous, hence $w_\nu$ is differentiable and
$$ w_\nu'(\zeta) =v_\nu(\zeta)>0\quad\mbox{for }\alpha_\nu <\zeta <\infty.$$
Let $z_\nu$ be the inverse function of $w_\nu$, then
$$ z_\nu'(\tau) =\frac{1}{v_\nu(z_\nu(\tau))}\quad\mbox{for }0<\tau<\infty.$$
With
$$ D_\nu(\tau):= A_\nu^\bullet\cap H_{z_\nu(\tau)},\quad z_\lambda(\tau):= (1-\lambda)z_0(\tau)+\lambda z_1(\tau),$$
the inclusion
\begin{equation}\label{10} 
A_\lambda^\bullet\cap H_{z_\lambda(\tau)} \supseteq (1-\lambda)D_0(\tau)+ \lambda D_1(\tau)
\end{equation}
holds (trivially). For $\tau>0$ we have
\begin{align}
V_n(A_\nu \cap  H^-_{z_\nu(\tau)}) &= V_n(C\cap H^-_{z_\nu(\tau)}) - V_n(A_\nu^\bullet\cap H^-_{z_\nu(\tau)})\nonumber\\
&= V_n(C\cap H^-_{z_\nu(\tau)}) -\tau,\nonumber\\
V_n(A_\lambda\cap H^-_{z_\lambda(\tau)})&= V_n(C\cap H^-_{z_\lambda(\tau)}) - V_n(A_\lambda^\bullet\cap  H^-_{z_\lambda(\tau)}).\label{3a}
\end{align}
We write
$$ V_n(A_\lambda^\bullet\cap H^-_{z_\lambda(\tau)}) =: f(\tau).$$
Then, with $\alpha_\lambda=(1-\lambda)\alpha_0+\lambda\alpha_1$,
\begin{align*}
f(\tau) &= \int_{\alpha_\lambda}^{z_\lambda(\tau)} V_{n-1}(A^\bullet_\lambda\cap H_\zeta)\,\D \zeta\\
&= \int_0^\tau  V_{n-1}(A^\bullet_\lambda\cap H_{z_\lambda(t)})z_\lambda'(t)\,\D t\\
& \ge \int_0^\tau  V_{n-1}((1-\lambda)D_0(t)+\lambda D_1(t))z_\lambda'(t)\,\D t,
\end{align*}
by (\ref{10}). In the integrand, we use the Brunn--Minkowski inequality in dimension $n-1$ and obtain 
\begin{align}\label{11}
f(\tau) &\ge \int_0^\tau \left[(1-\lambda)v_0(z_0(t))^{\frac{1}{n-1}}+\lambda v_1(z_1(t))^{\frac{1}{n-1}}\right]^{n-1} \left[\frac{1-\lambda}{v_0(z_0(t))}+\frac{\lambda}{v_1(z_1(t))}\right]\D t \nonumber\\
&\ge \tau,
\end{align} 
where the last inequality follows by estimating the integrand according to \cite[p. 371]{Sch14}. 

From (\ref{3a}) we have
$$ V_n(A_\lambda\cap H^-_{z_\lambda(\tau)}) = V_n(C\cap H^-_{z_\lambda(\tau)})-f(\tau),$$
and we intend to let $\tau\to\infty$. Since $C$ is a cone, for $\zeta>0$,
$$ C\cap H^-_\zeta =\zeta C_1 \quad\mbox{with}\quad C_1:= C\cap H^-_1$$
and hence $V_n(C\cap H^-_\zeta)=\zeta^nV_n(C_1)$. Therefore,
$$ V_n(C\cap H^-_{z_\lambda(\tau)}) = [(1-\lambda)z_0(\tau)+\lambda z_1(\tau)]^n V_n(C_1),\qquad V_n(C\cap H^-_{z_\nu(\tau)})= z_\nu(\tau)^nV_n(C_1).$$
This gives
\begin{align*}
& V_n(A_\lambda\cap H^-_{z_\lambda(\tau)})´\\
&= \left[(1-\lambda)V_n(C\cap H^-_{z_0(\tau)})^{\frac{1}{n}} +\lambda V_n(C\cap H^-_{z_1(\tau)})^{\frac{1}{n}}\right]^n-f(\tau)\\
&= \left[(1-\lambda)[V_n(A_0\cap H^-_{z_0(\tau)})+\tau]^{\frac{1}{n}} +\lambda [V_n(A_1\cap H^-_{z_1(\tau)})+\tau]^{\frac{1}{n}}\right]^n-f(\tau)\\
&= \left[(1-\lambda)[b_0(\tau)+\tau]^{\frac{1}{n}} + \lambda[b_1(\tau)+\tau]^{\frac{1}{n}}\right]^n -f(\tau)
\end{align*}
with $b_\nu(\tau)= V_n(A_\nu\cap H^-_{z_\nu(\tau)})$ for $\nu=0,1$. Note that (\ref{1a}) implies
$$ \lim_{\tau\to\infty} b_\nu(\tau)= 1.$$
Using the mean value theorem (for each fixed $\tau$), we can write
$$ (b_1(\tau)+\tau)^{\frac{1}{n}} -(b_0(\tau)+\tau)^{\frac{1}{n}}  = (b_1(\tau)-b_0(\tau))\frac{1}{n}(b(\tau)+\tau)^{\frac{1}{n}-1}$$
with $b(\tau)$ between $b_0(\tau)$ and $b_1(\tau)$, and hence tending to $1$ as $\tau\to\infty$. With $\frac{1}{n}(b(\tau)+\tau)^{\frac{1}{n}-1}=:h(\tau)=O\left(\tau^{\frac{1-n}{n}}\right)$ (as $\tau\to\infty)$, we get
\begin{align*}
& V_n(A_\lambda\cap H^-_{z_\lambda(\tau)})\\ 
&= \left[(1-\lambda)(b_0(\tau)+\tau)^{\frac{1}{n}} +\lambda\left((b_0(\tau)+\tau)^{\frac{1}{n}} + (b_1(\tau)-b_0(\tau))h(\tau)\right)\right]^n-f(\tau)\\
&= \left[(b_0(\tau)+\tau)^{\frac{1}{n}} +\lambda(b_1(\tau)-b_0(\tau))h(\tau))\right]^n-f(\tau)\\
&= b_0(\tau)+\tau-f(\tau) +\sum_{r=1}^n \binom{n}{r} (b_0(\tau)+\tau)^{\frac{n-r}{n}} \left[\lambda(b_1(\tau)-b_0(\tau))\right]^rh(\tau)^r.
\end{align*}
Since  $b_0(\tau)\to 1$, $f(\tau)\ge \tau$, $(b_0(\tau)+\tau)^{\frac{n-r}{n}}h(\tau)^r =O(\tau^{1-r})$, and $b_1(\tau)-b_0(\tau)\to 0$ as $\tau\to\infty$, we conclude that
$$ V_n(A_\lambda) =\lim_{\tau\to\infty} V_n(A_\lambda\cap H^-_{z_\lambda(\tau)}) \le 1.$$
This proves that 
\begin{equation}\label{12}
V_n((1-\lambda)A_0\oplus \lambda A_1)\le 1.
\end{equation}

If there exists a number $\tau_0>0$ for which $f(\tau_0)=\tau_0+\varepsilon$ with $\varepsilon>0$, then, for $\tau>\tau_0$,
\begin{align*}
f(\tau) &= V_n(A_\lambda^\bullet\cap  H^-_{z_\lambda(\tau)})\\
&= \tau_0+\varepsilon+ \int_{\tau_0}^\tau \left[(1-\lambda)v_0(z_0(t))^{\frac{1}{n-1}}+\lambda v_1(z_1(t))^{\frac{1}{n-1}}\right]^{n-1} \left[\frac{1-\lambda}{v_0(z_0(t))}+\frac{\lambda}{v_1(z_1(t))}\right]\D t\\
& \ge \tau_0+\varepsilon+(\tau-\tau_0)= \tau+\varepsilon,
\end{align*}
and as above we obtain that $V_n(A_\lambda)\le 1-\varepsilon$.

Suppose now that (\ref{12}) holds with equality. Then, as just shown, we have $f(\tau)=\tau$ for all $\tau\ge 0$. Thus, we have equality in (\ref{11}) and hence equality in (\ref{10}), for all $\tau\ge 0$. Explicitly, this means that
\begin{equation}\label{13} 
A_\lambda^\bullet \cap H_{z_\lambda(\tau)} = (1-\lambda)(A_0^\bullet \cap H_{z_0(\tau)}) +\lambda(A_1^\bullet \cap H_{z_1(\tau)}) \quad\mbox{for all } \tau\ge 0.
\end{equation}
We claim that this implies
\begin{equation}\label{14} 
A_\lambda^\bullet \cap H^-_{z_\lambda(\tau)} = (1-\lambda)(A_0^\bullet \cap H^-_{z_0(\tau)}) +\lambda(A_1^\bullet \cap H^-_{z_1(\tau)})
\end{equation}
for all $\tau\ge 0$. For the proof, let $x\in A_\lambda^\bullet \cap H^-_{z_\lambda(\tau)}$. Then there is a number $\sigma\in[0,\tau]$ such that $x \in A^\bullet_\lambda \cap H_{z_\lambda(\sigma)}$. By (\ref{13}),
\begin{align*}
x &\in (1-\lambda)(A_0^\bullet\cap H_{z_0(\sigma)}) + \lambda(A_1^\bullet\cap H_{z_1(\sigma)})\\
&\subset (1-\lambda)(A_0^\bullet\cap H^-_{z_0(\tau)}) + \lambda(A_1^\bullet\cap H^-_{z_1(\tau)}),
\end{align*}
since $\sigma\le\tau$ implies $H_{z_\nu(\sigma)} \subset H^-_{z_\nu(\tau)}$. This shows the inclusion $\subseteq$ in (\ref{14}). The inclusion $\supseteq$ is trivial.

To (\ref{14}), we can now apply the Brunn--Minkowski inequality for $n$-dimensional convex bodies and conclude that
$$ V_n(A_\lambda^\bullet \cap H^-_{z_\lambda(\tau)}) \ge \tau.$$
But we know that equality holds here, since equality holds in (\ref{11}), hence the convex bodies $A_0^\bullet \cap H^-_{z_0(\tau)}$ and $A_1^\bullet \cap H^-_{z_1(\tau)}$, which have the same volume, are translates of each other. The translation vector might depend on $\tau$, but in fact, it does not, since for $0<\sigma<\tau$, the body $A^\bullet_\nu\cap H_{z_\nu(\sigma)}$ is the intersection of $A^\bullet_\nu\cap H_{z_\nu(\tau)}$ with a closed halfspace. We conclude that $A^\bullet_1$ is a translate of $A^\bullet_0$, thus there is a vector $v$ with $A^\bullet_0+v=A^\bullet_1\subset C$. Suppose that $v\not= o$. Let $M$ be the set of all points $x\in{\rm int}\,C\cap{\rm bd}\,A^\bullet_0$ for which $x+\lambda v\notin A^\bullet_0$ for $\lambda>0$. The set $\bigcup_{x\in M} (x,x+v]$ is contained in $A_0$ and has infinite Lebesgue measure, a contradiction. Thus, $v=o$ and hence $A^\bullet_0=A^\bullet_1$.

This proves Theorem \ref{T1} under the assumption (\ref{1a}). Now let $A_0,A_1$ be arbitrary $C$-coconvex sets. As mentioned, also the volume of $C$-coconvex sets is homogeneous of degree $n$. Therefore (as in the case of convex bodies, see \cite[p. 370]{Sch14}), we define
$$ \overline A_\nu:= V_n(A_\nu)^{-1/n}A_\nu\enspace\mbox{for }\nu =0,1,\qquad \overline\lambda :=\frac{\lambda V_n(A_1)^{1/n}}{(1-\lambda)V_n(A_0)^{1/n}+\lambda V_n(A_1)^{1/n}}.$$
Then $V_n(\overline A_\nu)=1$ for $\nu=0,1$, hence $V_n((1-\overline \lambda)\overline A_0\oplus\overline\lambda \,\overline A_1)\le 1$, as just proved.  This gives the assertion.

\section{A volume representation}\label{sec4}

The proof of Theorem \ref{T2} requires that we develop the initial steps of a theory of mixed volumes for $C$-coconvex sets. First we derive an integral representation of the volume of $C$-coconvex sets.

Let $A$ be a $C$-coconvex set, and let $u\in\Omega_C$. Since $o\notin A^\bullet$ (because $A\not=\emptyset$), there is a supporting halfspace of $A^\bullet$ with outer normal vector $u$ and not containing $o$. Therefore, the support function $h(A^\bullet,\cdot)$ of $A^\bullet$, defined by $h(A^\bullet,u)=\sup\{\langle x,u\rangle: x\in A^\bullet\}$ for $u\in\Omega_C$, satisfies
$$ -\infty < h(A^\bullet,u)<0\quad\mbox{for }u\in \Omega_C.$$
We set
$$ \overline h(A,u) :=-h(A^\bullet,u)$$
and call the function $\overline h(A,\cdot):\Omega_C\to\R_+$ thus defined the {\em support function} of $A$. The {\em area measure} $\overline S_{n-1}(A,\cdot)$ of $A$ is defined by
$$ \overline S_{n-1}(A,\omega) := S_{n-1}(A^\bullet,\omega)= \Ha^{n-1}(\tau(A^\bullet,\omega))$$
for Borel sets $w\subseteq \Omega_C$. Recall that $\tau(A^\bullet,\omega)$ was defined as the set of boundary points of $A^\bullet$ at which there exists an outer unit normal vector falling in $\omega$. 

The volume of the $C$-coconvex set $A$ has an integral representation similar to that in the case of convex bodies, as stated in the following lemma.

\begin{lemma}\label{L1}
The volume of a $C$-coconvex set $A$ can be represented by
\begin{equation}\label{3.0}
V_n(A)=\frac{1}{n} \int_{\Omega_C} \overline h(A,u)\,\overline S_{n-1}(A,\D u).
\end{equation}
\end{lemma}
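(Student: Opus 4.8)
The plan is to reduce the claimed identity to the classical volume formula for convex bodies, applied to the truncations $A^\bullet_t = A^\bullet\cap H^-_t$, and then pass to the limit $t\to\infty$. Write $K=A^\bullet$, so $K$ is a closed convex subset of $C$ with $V_n(C\setminus K)<\infty$, and $\overline h(A,u)=-h_K(u)$. The set $K_t=K\cap H^-_t$ is a convex body for $t>\alpha$ (where $H_\alpha$ is the first hyperplane meeting $K$), and for $C\cap H^-_t=:C_t$ we have the elementary decomposition $V_n(C_t)=V_n(C_t\setminus K_t)+V_n(K_t)$, i.e. $V_n(K_t)=V_n(C_t)-V_n(A\cap H^-_t)$. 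Since $C$ is a cone, $V_n(C_t)=t^nV_n(C_1)$; and since $V_n(A)<\infty$, we have $V_n(A\cap H^-_t)\to V_n(A)$ as $t\to\infty$.

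Next I would write down the classical support-function volume formula for the convex body $K_t$. Its boundary splits into the ``free'' part $\bd K_t\cap \inn C$ (with normals in $\Omega_C$), the part lying on $\bd C$, and the cap $K\cap H_t$ (with outer normal $w$). Applying $V_n(K_t)=\frac1n\int_{\Sn}h_{K_t}(u)\,S_{n-1}(K_t,\D u)$ and splitting the sphere accordingly: the contribution of the facet in $H_t$ is $\frac1n\,t\,V_{n-1}(K\cap H_t)$; the contribution of normals $u\in\Omega_C$ is, for $t$ large enough that $K\cap H^-_t$ already captures the relevant boundary near a fixed $\omega\Subset\Omega_C$, exactly $\frac1n\int h_{K}(u)\,S_{n-1}(K_t,\D u)$, and on such $\omega$ one has $S_{n-1}(K_t,\cdot)\to \overline S_{n-1}(A,\cdot)=S_{n-1}(K,\cdot)$; the contribution of normals pointing along $\bd C$ needs separate care (see below). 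Rearranging, $V_n(A\cap H^-_t)=t^nV_n(C_1)-V_n(K_t)$, and the terms $t^nV_n(C_1)$ and $\frac1n t\,V_{n-1}(K\cap H_t)$ must cancel in the limit, leaving $V_n(A)=-\frac1n\int_{\Omega_C}h_K(u)\,S_{n-1}(K,\D u)=\frac1n\int_{\Omega_C}\overline h(A,u)\,\overline S_{n-1}(A,\D u)$.

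The main obstacle will be controlling the boundary pieces of $K_t$ that do \emph{not} have normals in $\Omega_C$: namely the portion of $\bd K_t$ lying in $\bd C$, and, more delicately, the ``new'' facets created by the cut $H_t$ and by edges where $K$ meets $\bd C$. The cleanest way to handle the $\bd C$-part is to observe that the origin $o$ lies in the relevant supporting hyperplanes there (every supporting hyperplane of $C$ passes through $o$), so those boundary pieces are cones with apex $o$ and contribute $0$ to a volume formula written with $o$ as the center — more precisely, I would use the decomposition of $V_n(A\cap H^-_t)$ itself as an integral of $\frac1n\,(\text{distance from }o)$ over $\bd(A\cap H^-_t)$, where the parts lying in $\bd C$ drop out because they are ``seen edge-on'' from $o$, the part in $H_t$ gives the divergent cap term, and the part in $\inn C\cap\bd K$ is precisely the radial graph over $\Omega_C$ whose contribution is what we want.

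Concretely, I would run the argument as a single limit: fix an exhaustion $\omega_1\subseteq\omega_2\subseteq\cdots$ of $\Omega_C$ by compact sets, show $\overline S_{n-1}(A,\Omega_C\setminus\omega_k)\to 0$ and $\int_{\Omega_C\setminus\omega_k}\overline h(A,u)\,\overline S_{n-1}(A,\D u)\to 0$ using finiteness of $V_n(A)$ together with the monotone–limit version of the classical formula applied to the bounded coconvex sets obtained by replacing $K$ with $\conv(K\cup(C\cap H^+_t))$-type constructions, and then let $t\to\infty$ and $k\to\infty$. The finiteness $V_n(A)<\infty$ is exactly what makes the tail integrals vanish; the homogeneity of $V_n$ on coconvex sets and the exact cancellation of the two $t$-divergent terms is where I expect the bookkeeping to be heaviest, and I would isolate it as a short computational lemma about $C_t$ versus the truncated cone.
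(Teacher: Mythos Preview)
Your approach is essentially the paper's: truncate $A^\bullet$ by $H^-_t$, apply the classical formula $V_n(K_t)=\frac{1}{n}\int_{\Sn}h_{K_t}\,\D S_{n-1}(K_t,\cdot)$, split $\Sn$ into the part over $\Omega_C$, the part over $\bd C^\circ$ (where $h=0$), the singleton $\{w\}$, and a residual set carried by singular points, and then let $t\to\infty$. Two points deserve sharpening.

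First, the residual set: normals $u\notin\omega_t\cup\eta_t\cup\{w\}$ (in the paper's notation) are attained only at singular boundary points of $K_t$, so this set has $S_{n-1}(K_t,\cdot)$-measure zero. You gesture at this (``edges where $K$ meets $\bd C$'') but should make the singular-point argument explicit; it is what lets you ignore everything except $\omega_t$, $\eta_t$, and $\{w\}$.

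Second, and more importantly, the two ``divergent'' terms do \emph{not} cancel exactly. Their difference is
\[
t^n V_n(C_1)-\tfrac{1}{n}\,t\,V_{n-1}(K\cap H_t)=\tfrac{1}{n}\,t\,q(t),\qquad q(t):=V_{n-1}\bigl((C\setminus K)\cap H_t\bigr),
\]
and there is no reason for $t\,q(t)\to 0$ along every $t\to\infty$. What saves you is that $V_n(A)=\int_0^\infty q(t)\,\D t<\infty$, which forces $\liminf_{t\to\infty} t\,q(t)=0$; hence one may choose a sequence $t_i\to\infty$ with $t_i\,q(t_i)\to 0$ and pass to the limit along it. This is the step the paper isolates, and it is the genuine analytic content beyond bookkeeping---your phrase ``must cancel in the limit'' hides exactly this.
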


\begin{proof}
Recall that $M_t:=M\cap H^-_t$ for $M\subseteq C$, in particular, $C_t=C\cap H^-_t$. We write $(A^\bullet)_t=A^\bullet_t$, and later also $(A^\bullet_i)_t=A^\bullet_{i,t}$.

Let $t>0$ be such that $A^\bullet_t$ has interior points. Let
$$ \omega_t:= \sigma(A^\bullet_t, {\rm int}\,C_t),$$
that is, the spherical image of the set of boundary points of $A^\bullet$ in the interior of $C_t$. Further, let
$$ \eta_t:= \sigma(A^\bullet_t,{\rm bd}\,C)\cap {\rm bd}\,C^\circ.$$
By a standard representation of the volume of convex bodies (formula (5.3) in \cite{Sch14}), we have
$$ V_n(A^\bullet_t) =\frac{1}{n} \int_{\Sn} h(A^\bullet_t,u)\,S_{n-1}(A^\bullet_t,\D u).$$
Here,
$$ \int_{\eta_t} h(A^\bullet_t,u)\,S_{n-1}(A^\bullet_t,\D u)=0,$$
since $u\in\eta_t$ implies $h(A^\bullet_t,u)=0$. We state that
\begin{equation}\label{3.1}
S_{n-1}(A^\bullet_t,\Sn\setminus (\omega_t \cup \eta_t \cup\{w\}))=0.
\end{equation}
For the proof, let $x$ be a boundary point of $A^\bullet_t$ where a vector $u\in \Sn\setminus (\omega_t \cup \eta_t \cup\{w\})$ is attained as outer normal vector. Then $x\notin {\rm int }\,C_t$ and hence $x\in H_t$ or $x\in{\rm bd}\,C$. If $x\in H_t$, then $u\not= w$ implies that $x$ lies in two distinct supporting hyperplanes of $A^\bullet_t$. If $x\in ({\rm bd}\,C)\setminus H_t$, then $u\notin \eta_t$ implies that $x$ lies in two distinct supporting hyperplanes of $A^\bullet_t$. In each case, $x$ is a singular boundary point of $A^\bullet_t$. Now the assertion (\ref{3.1}) follows from \cite[(4.32) and Thm. 2.2.5]{Sch14}. 

As a result, we have 
$$ V_n(A^\bullet_t) =\frac{1}{n} \int_{\omega_t\cup\{w\}} h(A^\bullet_t,u)\,S_{n-1}(A^\bullet_t,\D u).$$
Since
$$ h(A^\bullet_t,w)=t,\qquad S_{n-1}(A^\bullet_t,\{w\})= V_{n-1}(A^\bullet\cap H_t),$$
 we obtain
$$ V_n(A^\bullet_t)= -\frac{1}{n} \int_{\omega_t} \overline h(A,u)\,\overline S_{n-1}(A,\D u) +\frac{1}{n}tV_{n-1}(A^\bullet\cap H_t),$$
by the definition of $\overline h(A,\cdot)$ and $\overline S_{n-1}(A,\cdot)$. Writing
$$ B(t):= {\rm conv}((A^\bullet\cap H_t)\cup\{o\})\setminus A^\bullet_t,$$
we have
$$ V_n(B(t))=\frac{1}{n}tV_{n-1}(A^\bullet\cap H_t)-V_n(A^\bullet_t)$$
and thus
$$ V_n(B(t))=\frac{1}{n} \int_{\omega_t} \overline h(A,u)\,\overline S_{n-1}(A,\D u). $$
On the other hand, writing
$$ q(t):= V_{n-1}(C\cap H_t)-V_{n-1}(A^\bullet\cap H_t),$$
we get
$$ V_n(A_t) = V_n(B(t))+\frac{1}{n}tq(t) = \frac{1}{n} \int_{\omega_t} \overline h(A,u)\,\overline S_{n-1}(A,\D u)+\frac{1}{n}tq(t).$$

Given $\varepsilon>0$, to each $t_0>0$ there exists $t\ge t_0$ with $tq(t)<\varepsilon$. Otherwise, there would exist $t_0$ with $tq(t)\ge\varepsilon$ for $t\ge t_0$ and hence $\int_{t_0}^\infty q(t)\D t =\infty$, which yields $V_n(A)=\infty$, a contradiction. Therefore, we can choose an increasing sequence $(t_i)_{i\in\N}$ with $t_i\to\infty$ for $i\to\infty$ such that $t_iq(t_i)\to 0$. From
$$ V_n(A_{t_i}) = \frac{1}{n}\int_{\omega_{t_i}} \overline h(A,u)\,\overline S_{n-1}(A,\D u) +\frac{1}{n}t_iq(t_i)$$
and $\omega_{t_i}\uparrow \Omega_C$ we then obtain 
$$ V_n(A) = \frac{1}{n}\int_{\Omega_C} \overline h(A,u)\,\overline S_{n-1}(A,\D u),$$
as stated.
\end{proof}

\section{Mixed volumes of bounded $C$-coconvex sets}\label{sec5}

First we introduce, in this section, mixed volumes and their representations for bounded coconvex sets. Let $A$ be a bounded $C$-coconvex set. Then $A\subset {\rm int}\,H^-_t$ for all sufficiently large $t$. For bounded $C$-coconvex sets $A_1,\dots,A_{n-1}$, we define their {\em mixed area measure} by
$$ \overline S(A_1,\dots,A_{n-1},\omega) = S(A^\bullet_{1,t},\dots,A^\bullet_{n-1,t},\omega)$$
for Borel sets $\omega\subseteq\Omega_C$, where $t$ is chosen sufficiently large. Here $S(A^\bullet_{1,t},\dots,A^\bullet_{n-1,t},\cdot)$ is the usual mixed area measure of the convex bodies $A^\bullet_{1,t},\dots,A^\bullet_{n-1,t}$ (see \cite[Sect. 5.1]{Sch14}). Clearly, the definition does not depend on $t$. It should be noted that the mixed area measure of bounded $C$-coconvex sets is only defined on $\Omega_C$, and it is finite. For bounded $C$-coconvex sets $A_1,\dots,A_n$, we define  their {\em mixed volume} by
\begin{equation}\label{5.0}
\overline V(A_1,\dots,A_n)=\frac{1}{n}\int_{\Omega_C} \overline h(A_1,u)\,\overline S(A_2,\dots,A_n,\D u).
\end{equation}

\begin{lemma}\label{L2}
The mixed volume $ \overline V(A_1,\dots,A_n)$ is symmetric in $A_1,\dots,A_n$.
\end{lemma}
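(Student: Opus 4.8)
The plan is to reduce the symmetry of $\overline V(A_1,\dots,A_n)$ to the known symmetry of the classical mixed volume of convex bodies, via the truncation operation $A\mapsto A^\bullet_t$. Fix $t>0$ large enough that all the bounded $C$-coconvex sets $A_1,\dots,A_n$ satisfy $A_i\subset\inn H^-_t$, equivalently that $A^\bullet_i\cap H_t=C\cap H_t$ for every $i$. First I would record the elementary but crucial geometric fact underlying the whole section: for $t$ in this range, the convex bodies $A^\bullet_{1,t},\dots,A^\bullet_{n,t}$ all agree with $C_t$ outside a fixed neighbourhood of $o$ — more precisely, each coincides with $C_t$ in a relative neighbourhood of $\bd C_t\setminus H_t$ within $C_t$, and each contains the facet $C\cap H_t$. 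Consequently the outer unit normals of $A^\bullet_{i,t}$ that are \emph{not} shared, as a common normal, with $C_t$ itself lie in $\Omega_C$, and the support functions $h(A^\bullet_{i,t},\cdot)$ coincide with $h(C_t,\cdot)=0$ on $\bd C^\circ\cap\Sn$ and equal $t$ at $w$.

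Next I would write out the classical symmetric formula for the mixed volume of the convex bodies $A^\bullet_{1,t},\dots,A^\bullet_{n,t}$, namely
\begin{equation*}
V(A^\bullet_{1,t},\dots,A^\bullet_{n,t})=\frac1n\int_{\Sn}h(A^\bullet_{1,t},u)\,S(A^\bullet_{2,t},\dots,A^\bullet_{n,t},\D u),
\end{equation*}
which is symmetric in its $n$ arguments by \cite[Sect. 5.1]{Sch14}. The task is then to split the sphere $\Sn$ as the disjoint union of $\Omega_C$, the set $\bd C^\circ\cap\Sn$ of normals coming from $\bd C$, the singleton $\{w\}$, and a remainder carrying no mixed-area mass. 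Arguing exactly as in the proof of Lemma \ref{L1} — each boundary point outside $\inn C_t$ and off $H_t$ at which a normal not in $\bd C^\circ\cap\Sn$ is attained is a singular point, so contributes nothing by \cite[(4.32), Thm. 2.2.5]{Sch14}, now applied to the mixed area measure via its absolute continuity with respect to the sum of the individual surface area measures — I would show that
\begin{equation*}
S(A^\bullet_{2,t},\dots,A^\bullet_{n,t},\Sn\setminus(\Omega_C\cup(\bd C^\circ\cap\Sn)\cup\{w\}))=0.
\end{equation*}
On $\bd C^\circ\cap\Sn$ the integrand $h(A^\bullet_{1,t},u)$ vanishes; at $\{w\}$ the contribution is $t\,S(A^\bullet_{2,t},\dots,A^\bullet_{n,t},\{w\})/n$, and this term is actually independent of the labelling of the first argument because $h(A^\bullet_{i,t},w)=t$ for every $i$ while the mass $S(A^\bullet_{2,t},\dots,A^\bullet_{n,t},\{w\})$ is the mixed volume of the facets $A^\bullet_{i}\cap H_t=C\cap H_t$, i.e.\ the same convex body for all $i$, hence this piece is a symmetric quantity. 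Discarding these two pieces from the symmetric identity for $V(A^\bullet_{1,t},\dots,A^\bullet_{n,t})$ leaves
\begin{equation*}
\frac1n\int_{\Omega_C}h(A^\bullet_{1,t},u)\,S(A^\bullet_{2,t},\dots,A^\bullet_{n,t},\D u)=-\overline V(A_1,\dots,A_n),
\end{equation*}
using $h(A^\bullet_{1,t},u)=h(A^\bullet_1,u)=-\overline h(A_1,u)$ on $\Omega_C$ and $S(A^\bullet_{2,t},\dots,A^\bullet_{n,t},\cdot)=\overline S(A_2,\dots,A_n,\cdot)$ there by definition. Thus $\overline V(A_1,\dots,A_n)$ equals the value of a manifestly symmetric expression — the classical mixed volume minus two symmetric correction terms — and symmetry follows.

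The main obstacle I anticipate is the careful bookkeeping at the vertex $w$: one must check both that $S(A^\bullet_{2,t},\dots,A^\bullet_{n,t},\{w\})$ really is a symmetric function of all of $A_1,\dots,A_n$ (this uses that every $A^\bullet_{i,t}$ has the \emph{same} facet $C\cap H_t$ with outer normal $w$, so that the $(n-1)$-dimensional mixed area measure at $w$ is the ordinary volume of that common facet, which does not depend on any of the arguments) and that no other part of $\Sn$ contributes — i.e.\ that the ``common normals of $C_t$'' on $\bd C^\circ\cap\Sn$ together with $\{w\}$ and the singular set really exhaust $\Sn\setminus\Omega_C$ up to $S(\cdot)$-null sets. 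Once the decomposition of the sphere is established as in Lemma \ref{L1}, the rest is a direct translation of the convex-body symmetry, with the two extra terms being visibly label-independent.
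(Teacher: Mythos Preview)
Your approach is essentially the paper's: decompose $\Sn$ into $\Omega_C$, $\Sn\cap\bd C^\circ$, $\{w\}$, and a null remainder $\omega_0$, and deduce $V(A^\bullet_{1,t},\dots,A^\bullet_{n,t})=V_n(C_t)-\overline V(A_1,\dots,A_n)$, whence symmetry follows from that of the classical mixed volume.

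One technical caution: the step where you kill the contribution on $\omega_0$ is justified by an appeal to ``absolute continuity of the mixed area measure with respect to the sum of the individual surface area measures'', and that general statement is \emph{false}. For instance, in $\R^3$ take $K_1$ a horizontal disk and $K_2$ a vertical segment: then $S(K_1,K_2,\cdot)$ is supported on the equator, while $S_2(K_1,\cdot)+S_2(K_2,\cdot)$ has no mass there. The paper avoids this by arguing directly that for any $\lambda_2,\dots,\lambda_n\ge 0$ the \emph{sum body} $\lambda_2 A^\bullet_{2,t}+\dots+\lambda_n A^\bullet_{n,t}$ has only singular boundary points with outer normal in $\omega_0$ (since near $\bd C$ and near $H_t$ each $A^\bullet_{i,t}$, and hence the sum, coincides with a scaled copy of $C_t$); then \cite[(5.21) and Thm.~2.2.5]{Sch14} give $S(A^\bullet_{2,t},\dots,A^\bullet_{n,t},\omega_0)=0$. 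With this correction your argument goes through and matches the paper's.
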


\begin{proof}
We choose $t$ so large that $A_i\subset H^-_t$ for $i=1,\dots,n$. The mixed volume of the convex bodies $A^\bullet_{1,t},\dots,A^\bullet_{n,t}$ is given by
$$ V(A^\bullet_{1,t},\dots,A^\bullet_{n,t}) =\frac{1}{n} \int_{\Sn} h(A^\bullet_{1,t},u)\,S(A^\bullet_{2,t},\dots,A^\bullet_{n,t},\D u).$$
The sphere $\Sn$ is the disjoint union of the sets
$$ \Omega_C,\; \Sn\cap {\rm bd}\,C^\circ,\; \{w\}, \mbox{ and the remaining set } \omega_0.$$
For $u\in \Sn\cap {\rm bd}\,C^\circ$, we have $h(A^\bullet_{1,t},u)=0$. Since for each body $A^\bullet_{i,t}$ the support set with outer normal vector $w$ is equal to $C\cap H_t$, we get $S_{n-1}(A^\bullet_{i,t},\{w\})=V_{n-1}(C\cap H_t)$ for $i=2,\dots,n$ and thus, by \cite[(5.18)]{Sch14},
$$ S(A^\bullet_{2,t},\dots, A^\bullet_{n,t},\{w\})=V_{n-1}(C\cap H_t).$$
Therefore,
$$\frac{1}{n} \int_{\{w\}} h(A^\bullet_{1,t},u)\,S(A^\bullet_{2,t},\dots,A^\bullet_{n,t},\D u) = \frac{1}{n}tV_{n-1}(C\cap H_t)= V_n(C_t).$$
Further, we have
\begin{equation}\label{5.a} 
S(A^\bullet_{2,t},\dots,A^\bullet_{n,t},\omega_0)=0,
\end{equation}
since for $\lambda_2,\dots,\lambda_n\ge 0$, the convex body $\lambda_2A^\bullet_{2,t}+\dots+\lambda_n A^\bullet_{n,t}$ has the property that any of its points at which some $u\in\omega_0$ is an outer normal vector, is a singular point. Equation (\ref{5.a}) then follows from \cite[(5.21) and Thm. 2.2.5]{Sch14}. As a result, we obtain
\begin{align*}
V(A^\bullet_{1,t},\dots,A^\bullet_{n,t}) &= V_n(C_t)+ \frac{1}{n} \int_{\Omega_C} h(A^\bullet_{1,t},u)\,S(A^\bullet_{2,t},\dots,A^\bullet_{n,t},\D u)\\
&= V_n(C_t) - \frac{1}{n} \int_{\Omega_C} \overline h(A_1,u)\,\overline S(A_2,\dots,A_n,\D u)\\
&= V_n(C_t)-\overline V(A_1,\dots,A_n).
\end{align*}
Since $V(A^\bullet_{1,t},\dots,A^\bullet_{n,t})$ is symmetric in its arguments, also $\overline V(A_1,\dots,A_n)$ is symmetric in its arguments.
\end{proof}

Now let $A_1,\dots,A_m$, with $m\in \N$, be bounded $C$-coconvex sets, and choose $t>0$ with $A_i\subset C_t$ for $i=1,\dots,m$. By (\ref{3.0}), for $\lambda_1,\dots,\lambda_m\ge 0$,
\begin{align*}
& 
V_n(\lambda_1A_1\oplus\dots\oplus \lambda_mA_m)\\
&= \frac{1}{n} \int_{\Omega_C} \overline h(\lambda_1A_1\oplus\dots\oplus \lambda_mA_m,u)\,\overline S_{n-1}(\lambda_1A_1\oplus\dots\oplus \lambda_mA_m,\D u).
\end{align*}
Here, for $u\in\Omega_C$,
\begin{align}
\overline h(\lambda_1A_1\oplus\dots\oplus \lambda_mA_m,u) & =- h((\lambda_1A_1\oplus\dots\oplus \lambda_mA_m)^\bullet,u) \nonumber\\
&= - h(\lambda_1A_1^\bullet+\dots+ \lambda_mA_m^\bullet,u)\nonumber\\
&=-[\lambda_1h(A_1^\bullet,u)+\dots+ \lambda_m(A_m^\bullet,u)] \label{5.10}
\end{align}
and, for $t$ sufficiently large and Borel sets $\omega\subseteq\Omega_C$,
\begin{align}
&\overline S_{n-1}(\lambda_1 A_1 \oplus\dots\oplus \lambda_m A_m,\omega)\nonumber\\
&= S_{n-1}(\lambda_1 A_{t,1}^\bullet +\dots+ \lambda_m A_{t,m}^\bullet,\omega)\label{5.11}\\
&= \sum_{i_1,\dots,i_{n-1}=1}^m \lambda_{i_1}\cdots \lambda_{i_{n-1}}S(A_{t,i_1}^\bullet,\dots,A_{t,i_{n-1}}^\bullet,\omega),\nonumber\\
&= \sum_{i_1,\dots,i_{n-1}=1}^m \lambda_{i_1}\cdots \lambda_{i_{n-1}}\overline S(A_{i_1},\dots,A_{i_{n-1}},\omega),\nonumber
\end{align}
by \cite[(5.18)]{Sch14}. Using Lemma \ref{L2}, we conclude that
\begin{equation}\label{7} 
V_n(\lambda_1 A_1 \oplus\dots\oplus \lambda_m A_m) = \sum_{i_1,\dots,i_n=1}^m \lambda_{i_1}\cdots\lambda_{i_n} \overline V(A_{i_1},\dots,A_{i_n}),
\end{equation}
in analogy to  \cite[(5.17)]{Sch14}.

\section{Mixed volumes of general $C$-coconvex sets}\label{sec6}

We extend the mixed volumes to not necessarily bounded $C$-coconvex sets. For this, we use approximation by mixed volumes of bounded $C$-coconvex sets.

Let $\omega\subset\Omega_C$ be an open subset whose closure (in $\Sn$) is contained in $\Omega_C$. Let $A$ be a $C$-coconvex set, so that $A^\bullet =C\setminus A$ is closed and convex. We define
$$ A^\bullet_{(\omega)} := C\cap \bigcap_{u\in\omega} H^-(A^\bullet,u),\qquad A_{(\omega)}:= C\setminus A^\bullet_{(\omega)},$$
where $H^-(A^\bullet, u)$ denotes the supporting halfspace of the closed convex set $A^\bullet$ with outer normal vector $u$. We claim that $A_{(\omega)}$ is bounded. For the proof, we note that the set $\omega$, whose closure, ${\rm clos}\,\omega$, is contained in $\Omega_C$, has a positive distance from the boundary of $\Omega_C$ (relative to $\Sn$). Therefore, there is a number $a_0>0$ such that
\begin{equation}\label{6.0}
\langle x,u\rangle \le -a_0\quad\mbox{for $x\in C$ with $\|x\|=1$ and $u\in\omega$.}
\end{equation}
Let $x\in A_{(\omega)}$. Then there is some $u\in\omega$ with $x\notin H^-(A^\bullet, u)$, hence with $\langle x,u\rangle > h(A^\bullet,u)$. Since $\langle x,u\rangle \le -a_0\|x\|$ by (\ref{6.0}), we obtain
$$ \|x\| \le\frac{1}{a_0} \max\{-h(A^\bullet,u):u\in{\rm clos}\,\omega\}.$$
Thus, $A_{(\omega)}$ is a bounded $C$-coconvex set. 

With $A$ and $\omega$ as above, we associate another set, namely
$$ A[\omega]:= \bigcup_{x\in \tau(A^\bullet,\omega)\cap {\rm int}\,C} (o,x),$$
where $(o,x)$ denotes the open line segment with endpoints $o$ and $x$. We choose an increasing sequence $(\omega_j)_{j\in\N}$ of open subsets of $\Omega_C$ with closures in $\Omega_C$ and with $\bigcup_{j\in\N} \omega_j=\Omega_C$. Then
\begin{equation}\label{6.4a}
A[\omega_j] \uparrow {\rm int}\,A \qquad \mbox{as }j\to\infty.
\end{equation}
In fact, that the set sequence is increasing, follows from the definition. Let $y\in {\rm int}\,A$. Then there is a boundary point $x$ of $A^\bullet$ with $y\in (o,x)$. Let $u$ be an outer unit normal vector of $A^\bullet$ at $x$. Then $u\in\Omega_C$, hence $u\in\omega_j$ for some $j$. For this $j$, we have $y\in A[\omega_j]$. 

\begin{lemma}\label{L6.1}
If $A_1,\dots,A_n$ are $C$-coconvex sets and $\lambda_1,\dots,\lambda_n\ge 0$, then
\begin{equation}\label{6.10}
\lim_{j\to\infty} V_n(\lambda_1 A_{1(\omega_j)}\oplus\dots\oplus \lambda_n A_{n(\omega_j)}) = V_n(\lambda_1 A_1\oplus\dots\oplus \lambda_n A_n).
\end{equation}
\end{lemma}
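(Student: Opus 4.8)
\textbf{Proof plan for Lemma \ref{L6.1}.}

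The plan is to reduce the claim to the monotone convergence of volumes along the increasing exhaustion established in \eqref{6.4a}, applied simultaneously to all the pieces that enter the co-sum. First I would observe that the operation $A\mapsto A_{(\omega)}$ is compatible with the co-sum in the sense that $(\lambda_1 A_1\oplus\cdots\oplus\lambda_n A_n)^\bullet=\lambda_1 A_1^\bullet+\cdots+\lambda_n A_n^\bullet$, and that forming supporting halfspaces with normals in $\omega$ and intersecting with $C$ respects Minkowski sums of the polar bodies: since $h(\lambda_1 A_1^\bullet+\cdots+\lambda_n A_n^\bullet,u)=\sum_i\lambda_i h(A_i^\bullet,u)$ for $u\in\Omega_C$, one gets $H^-(\lambda_1 A_1^\bullet+\cdots+\lambda_n A_n^\bullet,u)=\sum_i \lambda_i H^-(A_i^\bullet,u)$ (as an identity of halfspaces, up to the obvious translation), and hence, intersecting over $u\in\omega_j$ and with $C$,
\begin{equation*}
\lambda_1 A_{1(\omega_j)}^\bullet+\cdots+\lambda_n A_{n(\omega_j)}^\bullet \subseteq \big(\lambda_1 A_1\oplus\cdots\oplus\lambda_n A_n\big)^\bullet_{(\omega_j)}.
\end{equation*}
Writing $B:=\lambda_1 A_1\oplus\cdots\oplus\lambda_n A_n$ and $B_j:=\lambda_1 A_{1(\omega_j)}\oplus\cdots\oplus\lambda_n A_{n(\omega_j)}$, the inclusion of the polars above says $B_j\supseteq B_{(\omega_j)}$, i.e.\ $B_j\subseteq B_{(\omega_j)}$; in any case one has the two-sided comparison $B_{(\omega_j)}\subseteq B_j\subseteq B$ (the left inclusion from the halfspace computation, the right inclusion because $A_{i(\omega_j)}^\bullet\supseteq A_i^\bullet$ forces $B_j^\bullet\supseteq B^\bullet$). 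Passing to volumes gives $V_n(B_{(\omega_j)})\le V_n(B_j)\le V_n(B)$.

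Next I would invoke \eqref{6.4a} in the form $B[\omega_j]\uparrow\inn B$, together with the elementary fact that the coconvex set $B$ and each $B_{(\omega_j)}$ have their boundary inside $\inn C$ "visible from the origin'' (the remark in Section \ref{sec2}): this identifies $B[\omega_j]$ with the subset of $B$ swept out by the rays hitting $\tau(B^\bullet,\omega_j)$, and the halfspace description of $B_{(\omega_j)}^\bullet$ shows that $B_{(\omega_j)}\supseteq B[\omega_j]$, since any ray through a point of $\tau(B^\bullet,\omega_j)$ that lies in $B$ still lies outside $B_{(\omega_j)}^\bullet$. Combined with $B_{(\omega_j)}\subseteq B$, monotone convergence yields $V_n(B_{(\omega_j)})\to V_n(B)$. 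Squeezing $V_n(B_j)$ between $V_n(B_{(\omega_j)})$ and $V_n(B)$ then gives \eqref{6.10}.

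The main obstacle I anticipate is the careful bookkeeping in the first step: the identity $H^-(\sum_i\lambda_i A_i^\bullet,u)=\sum_i\lambda_i H^-(A_i^\bullet,u)$ is a statement about translated halfspaces with common outer normal $u$, and one must check that intersecting these over all $u\in\omega_j$ and then with $C$ genuinely produces the inclusion $\lambda_1 A_{1(\omega_j)}^\bullet+\cdots\subseteq(\ \cdot\ )^\bullet_{(\omega_j)}$ rather than the reverse — the point being that a Minkowski sum of intersections is contained in the intersection of the Minkowski sums. A secondary subtlety is verifying that $B_{(\omega_j)}$ is genuinely coconvex (finite volume) so that \eqref{6.4a}-type reasoning applies to it; but this is exactly the boundedness of $A_{(\omega)}$ already proved in this section, applied to $B$, so it does not require new ideas. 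Everything else is monotone convergence and the squeeze.
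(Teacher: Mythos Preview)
Your plan is correct and follows essentially the same sandwich argument as the paper: squeeze $B_j:=\lambda_1 A_{1(\omega_j)}\oplus\cdots\oplus\lambda_n A_{n(\omega_j)}$ between a set that increases to $\mathrm{int}\,B$ (with $B:=\lambda_1 A_1\oplus\cdots\oplus\lambda_n A_n$) and $B$ itself, then apply monotone convergence via \eqref{6.4a}. The only difference is cosmetic: the paper proves $B[\omega_j]\subseteq B_j$ directly, using the support-set decomposition $F(\sum_i\lambda_i A_i^\bullet,u)=\sum_i\lambda_i F(A_i^\bullet,u)$, whereas you interpose the extra set $B_{(\omega_j)}$ and use the ``sum of intersections $\subseteq$ intersection of sums'' argument to get $B[\omega_j]\subseteq B_{(\omega_j)}\subseteq B_j$; both routes yield the same sandwich.

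One point to clean up: your sentence ``the inclusion of the polars above says $B_j\supseteq B_{(\omega_j)}$, i.e.\ $B_j\subseteq B_{(\omega_j)}$'' is self-contradictory. The inclusion $B_j^\bullet\subseteq B_{(\omega_j)}^\bullet$ that you established gives $B_{(\omega_j)}\subseteq B_j$ (taking complements in $C$), which is exactly the left inequality of the chain $B_{(\omega_j)}\subseteq B_j\subseteq B$ you then state correctly.
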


\begin{proof}
We state that
\begin{equation}\label{6.11} 
(\lambda_1 A_1\oplus\dots\oplus \lambda_n A_n)[\omega_j] \subseteq \lambda_1 A_{1(\omega_j)}\oplus\dots\oplus \lambda_n A_{n(\omega_j)} \subseteq \lambda_1 A_1\oplus\dots\oplus \lambda_n A_n.
\end{equation}
For the proof of the first inclusion, let $y \in (\lambda_1 A_1 \oplus \dots \oplus \lambda_n A_n)[\omega_j] $. Then there exists a point $x \in \tau(\lambda_1 A^\bullet_1 +\dots + \lambda_n A^\bullet_n,\omega_j)\cap {\rm int}\,C$ with $y \in (o,x)$. Let $u \in \omega_j$ be an outer unit normal vector of $\lambda_1 A^\bullet_1 + \dots + \lambda_n A^\bullet_n$ at $x$. Denoting by $F(K,u)$ the support set of a closed convex set $K$ with outer normal vector $u$, we have (by \cite[Thm. 1.7.5]{Sch14})
$$ F(\lambda_1 A^\bullet_1 +\dots + \lambda_n A^\bullet_n,u) =\lambda_1 F(A^\bullet_1,u)+\dots + \lambda_n F(A^\bullet_n,u),$$
hence there are points $x_i \in F(A^\bullet_i,u)$ ($i=1,\dots,n$) with $x=\lambda_1 x_1+\dots+\lambda_n x_n$. We have $x_i \in A^\bullet_{i(\omega_j)}$, hence $x\in \lambda_1 A_{1(\omega_j)}\oplus\dots\oplus \lambda_n A_{n(\omega_j)}$. This proves the first inclusion of (\ref{6.11}). The second inclusion follows immediately from the definitions. From (\ref{6.11}) and (\ref{6.4a}) we obtain 
$$ \lambda_1 A_{1(\omega_j)}\oplus\dots\oplus \lambda_n A_{n(\omega_j)}  \uparrow {\rm int}\,(\lambda_1 A_1\oplus\dots\oplus \lambda_n A_n)  \qquad \mbox{as }j\to\infty,$$
from which the assertion (\ref{6.10}) follows.
\end{proof}

For the bounded $C$-coconvex sets $A_{1(\omega_j)},\dots, A_{n(\omega_j)}$ we have from (\ref{7}) that
$$ V_n(\lambda_1 A_{1(\omega_j)}\oplus\dots\oplus \lambda_n A_{n(\omega_j)}) = \sum_{i_1,\dots,i_n=1}^n \lambda_{i_1}\cdots\lambda_{i_n} \overline V(A_{i_1(\omega_j)},\cdots, A_{i_n(\omega_j)}).$$
By Lemma \ref{L6.1}, the left side converges, for $j\to\infty$, to $V_n(\lambda_1 A_1\oplus\dots\oplus \lambda_n A_n)$. Since this holds for all $\lambda_1,\dots,\lambda_n\ge 0$, we can conclude that the limit
$$ \lim_{j\to\infty} \overline V(A_{i_1(\omega_j)},\dots,A_{i_n(\omega_j)}) =:\overline V(A_{i_1},\dots,A_{i_n})$$
exists and that
\begin{equation}\label{3.5} 
V_n(\lambda_1 A_1 \oplus\dots\oplus \lambda_n A_n) =  \sum_{i_1,\dots,i_n=1}^n \lambda_{i_1}\cdots\lambda_{i_n} \overline V(A_{i_1},\dots,A_{i_n}).
\end{equation}
We call $\overline V(A_1,\dots,A_n)$ the {\em mixed volume} of the $C$-coconvex sets $A_1,\dots,A_n$.

For this mixed volume, we shall now establish an integral representation. To that end, we note first that the support functions of $A^\bullet$ and $A^\bullet_{(\omega_j)}$ satisfy
\begin{equation}\label{13a} 
h(A^\bullet,u)= h(A^\bullet_{(\omega_j)},u) \quad\mbox{for }u\in\omega_j.
\end{equation}
Since $\omega_j$ is open, then for $u\in\omega_j$ the support functions of $A^\bullet$ and $A^\bullet_{(\omega_j)}$ coincide in a neighborhood of $u$. By \cite[Thm. 1.7.2]{Sch14}, the support sets of $A^\bullet$ and $A^\bullet_{(\omega_j)}$ with outer normal vector $u$ are the same. It follows that $\tau(A^\bullet,\omega_j) =\tau(A^\bullet_{(\omega_j)},\omega_j)$ and, therefore, that also
\begin{equation}\label{6.1}  
S_{n-1}(A^\bullet,\cdot) = S_{n-1}(A^\bullet_{(\omega_j)},\cdot) \quad\mbox{on }\omega_j.
\end{equation}

More generally, if $A_1,\dots,A_{n-1}$ are $C$-coconvex sets, we can define their mixed area measure by
\begin{equation}\label{6.4} 
S(A^\bullet_1,\dots,A^\bullet_{n-1},\cdot) = S(A^\bullet_{1(\omega_j)},\dots,A^\bullet_{n-1(\omega_j)},\cdot) \quad\mbox{on }\omega_j,
\end{equation}
for $j\in \N$. Since $\omega_j\uparrow\Omega_C$, this yields a Borel measure on all of $\Omega_C$. It need not be finite. Then we define
$$ \overline S(A_1,\dots,A_{n-1},\cdot) := S(A^\bullet_1,\dots,A^\bullet_{n-1},\cdot).$$

By Lemma \ref{L1}, (\ref{13a}) and (\ref{6.1}) we have
\begin{eqnarray*}
&& V_n(A_{(\omega_j)})\\
&&= \frac{1}{n} \int_{\omega_j} \overline h(A_{(\omega_j)},u)\, \overline S_{n-1}(A_{(\omega_j)},\D u) +  \frac{1}{n} \int_{\Omega_C\setminus\omega_j} \overline h(A_{(\omega_j)},u)\, \overline S_{n-1}(A_{(\omega_j)},\D u)\\
&&=\frac{1}{n} \int_{\omega_j} \overline h(A,u)\, \overline S_{n-1}(A,\D u) +  \frac{1}{n} \int_{\Omega_C\setminus\omega_j} \overline h(A_{(\omega_j)},u)\, \overline S_{n-1}(A_{(\omega_j)},\D u).
\end{eqnarray*}
From $A_{(\omega_j)}\uparrow A$ we get
\begin{equation}\label{6.2} 
\lim_{j\to\infty} V_n(A_{(\omega_j)})=V_n(A),
\end{equation}
and $\omega_j\uparrow \Omega_C$ gives
$$  \lim_{j\to\infty} \frac{1}{n} \int_{\omega_j} \overline h(A,u)\, \overline S_{n-1}(A,\D u) =\frac{1}{n} \int_{\Omega_C} \overline h(A,u)\, \overline S_{n-1}(A,\D u) =V_n(A).$$
It follows that
\begin{equation}\label{6.3}
\lim_{j\to\infty} \int_{\Omega_C\setminus\omega_j} \overline h(A_{(\omega_j)},u)\, \overline S_{n-1}(A_{(\omega_j)},\D u)=0.
\end{equation}

From (\ref{5.0}) and using (\ref{6.1}) and (\ref{6.4}), we get
\begin{eqnarray}
&& \overline V(A_{1(\omega_j)},\dots,A_{n(\omega_j)})\nonumber\\
&&= \frac{1}{n} \int_{\omega_j} \overline h(A_{1(\omega_j)},u)\,\overline S(A_{2(\omega_j)},\dots,A_{n(\omega_j)},\D u)\nonumber\\
&& \hspace{4mm} + \frac{1}{n} \int_{\Omega_C\setminus \omega_j} \overline h(A_{1(\omega_j)},u)\,\overline S(A_{2(\omega_j)},\dots,A_{n(\omega_j)},\D u)\nonumber\\
&&= \frac{1}{n} \int_{\omega_j} \overline h(A_{1},u)\,\overline S(A_2,\dots,A_{n},\D u)\nonumber\\
&& \hspace{4mm} + \frac{1}{n} \int_{\Omega_C\setminus \omega_j} \overline h(A_{1(\omega_j)},u)\,\overline S(A_{2(\omega_j)},\dots,A_{n(\omega_j)},\D u) \label{6.12}
\end{eqnarray}
Writing $A:=A_1\oplus\dots\oplus A_n$ we have the trivial estimates
$$ \overline h(A_{1(\omega_j)},u) \le \overline h(A_{(\omega_j)},u),\qquad \overline S(A_{2(\omega_j)},\dots,A_{n(\omega_j)},\cdot) \le \overline S_{n-1}(A_{(\omega_j)},\cdot).$$
Hence, the term (\ref{6.12}) can be estimated by
\begin{eqnarray*}
&& \frac{1}{n} \int_{\Omega_C\setminus \omega_j} \overline h(A_{1(\omega_j)},u)\,\overline S(A_{2(\omega_j)},\dots,A_{n(\omega_j)},\D u)\\
&& \le \frac{1}{n} \int_{\Omega_C\setminus \omega_j} \overline h(A_{(\omega_j)},u)\,\overline S_{n-1}(A_{(\omega_j)},\D u),
\end{eqnarray*}
and by (\ref{6.3}) this tends to zero for $j\to\infty$. We conclude that
\begin{equation}\label{6.5}  
\overline V(A_{1},\dots,A_{n}) =  \frac{1}{n} \int_{\Omega_C} \overline h(A_{1},u)\,\overline S(A_{2},\dots,A_{n},\D u).
\end{equation}

\section{Proof of Theorem \ref{T2}}\label{sec7}

Theorem \ref{T1} together with the polynomial expansion (\ref{3.5}) now allows similar conclusions as in the case of convex bodies. Let $A_0,A_1$ be $C$-coconvex sets, and write $A_\lambda= (1-\lambda)A_0\oplus\lambda A_1$ for $0\le \lambda\le 1$. A special case of (\ref{3.5}) reads
$$ V_n(A_\lambda) =\sum_{i=0}^n\binom{n}{i}(1-\lambda)^{n-i}\lambda^i\overline V(\underbrace{A_0,\dots,A_0}_{n-i},\underbrace{A_1,\dots,A_1}_i).$$
The function $f$ defined by $f(\lambda)=V_n(A_\lambda)^{1/n}-(1-\lambda)V_n(A_0)^{1/n}-\lambda V_n(A_1)^{1/n}$ for $0\le \lambda\le 1$ is convex, as follows from Theorem \ref{T1} and a similar argument as in the case of convex bodies (see \cite[pp. 369--370]{Sch14}). Also as in the convex body case (see \cite[p. 382]{Sch14}), one obtains the counterpart to Minkowski's first inequality, namely
\begin{equation}\label{8}
\overline V(A_0,\dots,A_0,A_1)^n\le V_n(A_0)^{n-1}V_n(A_1),
\end{equation}
with equality if and only if $A_0=\alpha A_1$ with some $\alpha>0$.

Now, under the assumptions of Theorem \ref{T2} there are $C$-coconvex sets $A_0,A_1$ with $\overline S_{n-1}(A_0,\cdot)= \overline S_{n-1}(A_1,\cdot)$. By (\ref{6.5}), 
$$ \overline V(A_0,\dots,A_0,A_1)= \frac{1}{n} \int_{\Omega_C} \overline h(A_1,u)\,\overline S_{n-1}(A_0,\D u).$$
Therefore, the assumption gives $\overline V(A_0,\dots,A_0,A_1) = V_n(A_1)$. Since $A_0$ and $A_1$ can be interchanged, we also have $\overline V(A_1,\dots,A_1,A_0) = V_n(A_0)$, hence multiplication gives 
$$ \overline V(A_0,\dots,A_0,A_1) \overline V(A_1,\dots,A_1,A_0) = V_n(A_0)V_n(A_1).$$
On the other hand, from (\ref{8}) we get 
$$\overline V(A_0,\dots,A_0,A_1) \overline V(A_1,\dots,A_1,A_0)\le V_n(A_0)V_n(A_1).$$ 
Thus, equality holds here, and hence in (\ref{8}), which implies that $A_0=\alpha A_1$ with $\alpha>0$. Since $\overline S_{n-1}(A_0,\cdot)= \overline S_{n-1}(A_1,\cdot)$, we have $\alpha=1$. This proves Theorem \ref{T2}.

\section{Coconvex Wulff shapes}\label{sec8}

In the theory of convex bodies, the Wulff shape, or Aleksandrov body, is a useful concept, in particular in connection with Aleksandrov's variational lemma. This is Lemma IV in Aleksandrov's classical work \cite{Ale38a}; it was reproduced in \cite[Sect. 7.5]{Sch14} and was the template for several later generalizations and applications. Here we carry this concept over to the coconvex setting. Also the purpose is the same: to provide a variational argument which is of use in the proofs of Theorems \ref{T3} and \ref{T4}. In establishing some properties of the Wulff shape, we can argue similarly as in \cite{Ale38a}, but because of some essential differences, we carry out the details.

In this section, $\omega\subset\Omega_C$ is a nonempty, compact set. The set $\omega$ has a positive distance from the boundary of $\Omega_C$ (relative to $\Sn$). Therefore, there is a number $a_0>0$ such that
\begin{equation}\label{1}
\langle x,u\rangle \le -a_0\quad\mbox{for $x\in C$ with $\|x\|=1$ and $u\in\omega$}.
\end{equation}
We say that a closed convex set $K$ is {\em $C$-determined by} $\omega$ if
$$ K= C\cap\bigcap_{u\in\omega} H^-(K,u).$$
By $\K(C,\omega)$ we denote the set of all closed convex sets that are $C$-determined by $\omega$. If $K\in\K(C,\omega)$ then $C\setminus K$ is bounded. In fact, let $x\in C\setminus K$. Then there is some $u\in\omega$ with $x\notin H^-(u,h_K(u))$, hence with $\langle  x,u\rangle > h_K(u)$. Since $\langle x,u\rangle \le -a_0\|x\|$ by (\ref{1}), we obtain
\begin{equation}\label{3}
\|x\|\le \frac{1}{a_0}\max\{ -h_K(u):u\in\omega\}.
\end{equation}
The maximum exists since $h_K$ is continuous and $\omega$ is compact.

Let $f:\omega\to\R$ be a positive, continuous function. The closed convex set
\begin{equation}\label{97} 
K:= C\cap\bigcap_{u\in\omega} H^-(u,-f(u)) 
\end{equation}
is called the {\em Wulff shape} associated with $(C,\omega,f)$. It follows from the definition that
\begin{equation}\label{2}
-h(K,u) \ge f(u)\quad\mbox{for }u\in\omega
\end{equation} 
and that $K$ is $C$-determined by $\omega$. In particular, a Wulff shape is a $C$-full set.

\begin{lemma}\label{L7.5.2}
If $K_j$ is the Wulff shape associated with $(C,\omega,f_j)$, for $j\in\N_0$, and if $(f_j)_{j\in\N}$ converges uniformly (on $\omega$) to $f_0$, then $K_j\to K_0$ (in the sense of Definition $\ref{D2.1}$).
\end{lemma}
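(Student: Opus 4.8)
The plan is to transfer the convergence statement to a statement about the complementary convex bodies, intersected with a fixed slab, where the classical stability of the Wulff shape under uniform convergence of the data applies. First I would fix a large $t_0>0$ and argue that for all sufficiently large $j$ the sets $K_j\cap C_{t_0}$ are nonempty with $K_j\cap H_{t_0}$ having interior points relative to $H_{t_0}$; this needs a uniform two-sided bound on the quantity $\max\{-h(K_j,u):u\in\omega\}$, which follows because $(f_j)$ converges uniformly to the positive continuous $f_0$ on the compact set $\omega$, hence is uniformly bounded above and bounded below by a positive constant, and (\ref{3}) together with (\ref{2}) then bounds $C\setminus K_j$ in a ball whose radius is independent of $j$.

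Next I would show $\lim_{j\to\infty}(K_j\cap C_t)=K_0\cap C_t$ in the Hausdorff metric for each $t\ge t_0$. The natural route is the Aleksandrov-type description of the Wulff shape: $K_j$ is the largest closed convex set contained in $C$ whose support function satisfies $-h(K_j,u)\ge f_j(u)$ on $\omega$, equivalently $K_j=C\cap\bigcap_{u\in\omega}H^-(u,-f_j(u))$. From $f_j\to f_0$ uniformly one gets, for every $\varepsilon>0$, the inclusions $H^-(u,-f_0(u)-\varepsilon)\subseteq H^-(u,-f_j(u))\subseteq H^-(u,-f_0(u)+\varepsilon)$ for all $u\in\omega$ and all large $j$, hence a sandwiching
\begin{equation*}
C\cap\bigcap_{u\in\omega}H^-(u,-f_0(u)-\varepsilon)\ \subseteq\ K_j\ \subseteq\ C\cap\bigcap_{u\in\omega}H^-(u,-f_0(u)+\varepsilon).
\end{equation*}
Intersecting with the fixed bounded set $C_t$ and letting $\varepsilon\downarrow 0$, the outer and inner bodies both converge (in the Hausdorff metric) to $K_0\cap C_t$, using continuity of the map $g\mapsto C_t\cap\bigcap_{u\in\omega}H^-(u,g(u))$ at constants-perturbations of $f_0$; squeezing gives $K_j\cap C_t\to K_0\cap C_t$. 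One should check that the limiting inner body $C\cap\bigcap_{u\in\omega}H^-(u,-f_0(u)-\varepsilon)$, intersected with $C_t$, indeed converges to $K_0\cap C_t$ and not to something smaller — this is where one invokes that $\omega$ is compact and $f_0$ continuous, so that $-f_0(u)-\varepsilon\to -f_0(u)$ uniformly and the Wulff construction is continuous in the data in the uniform norm.

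I expect the main obstacle to be the continuity of the Wulff-shape construction itself in the uniform topology on the data, i.e.\ that $g_k\to g$ uniformly on $\omega$ implies $C\cap\bigcap_{u\in\omega}H^-(u,-g_k(u))\to C\cap\bigcap_{u\in\omega}H^-(u,-g(u))$; the inclusion giving the upper Hausdorff bound is immediate, but the lower bound (that the limit body is not too small) requires an argument, typically: any boundary point $x$ of the limit body $K_0$ lies in some $H(u_0,-f_0(u_0))$ with $u_0\in\omega$, and perturbing $x$ slightly inward one lands inside all the halfspaces $H^-(u,-g_k(u))$ for large $k$ by uniform convergence and a compactness/continuity argument on $\omega$. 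This is the analogue of the corresponding step in \cite[Sect.~7.5]{Sch14} and in \cite{Ale38a}; the only genuinely new point here is bookkeeping the cone $C$ and the slab $C_t$, which is harmless since $C_t$ is a fixed convex body and intersection with a fixed convex body is Hausdorff-continuous on the space of convex bodies meeting its interior.
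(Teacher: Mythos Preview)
Your proposal is correct and reaches the same conclusion as the paper, but the presentation differs. The paper argues pointwise via the Kuratowski-type convergence criterion \cite[Thm.~1.8.8]{Sch14}: it shows directly that every $x\in\inn K_0$ satisfies $\langle x,u\rangle\le -f_0(u)-\varepsilon$ for all $u\in\omega$ (compactness of $\omega$, continuity of $f_0$), hence $x\in K_j$ for large $j$; and conversely that any limit of points $x_{i_j}\in K_{i_j}$ lies in $K_0$. Your route instead sandwiches $K_j$ between the Wulff shapes for $f_0+\varepsilon$ and $f_0-\varepsilon$ and lets $\varepsilon\downarrow 0$ using monotone convergence of these families to $K_0$. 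The two arguments are essentially the same content in different packaging: your ``inner body converges up to $K_0$'' step is exactly the paper's ``interior points are eventually in $K_j$'' step, and your outer inclusion is the trivial direction in both proofs. The paper's version is a bit shorter because it invokes the ready-made criterion; yours is more self-contained but requires you to check explicitly that the monotone families converge in Hausdorff distance after intersecting with $C_t$ (which you correctly flag as the only nontrivial point). Either way, the key ingredients---compactness of $\omega$, positivity and continuity of $f_0$, and the uniform bound (\ref{3}) on $C\setminus K_j$---are identified correctly in your plan.
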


\begin{proof} Because convergence in the sense of Definition \ref{D2.1} is equivalent to convergence of suitable sequences of convex bodies, we can use the convergene criterion \cite[Thm. 1.8.8]{Sch14}. First let $x\in{\rm int}\,K_0$. Then there is some $\varepsilon>0$ with $\langle x,u\rangle \le -f_0(u)-\varepsilon$ for all $u\in\omega$. Since $f_j\to f_0$ uniformly, there is some $j_0$ with $|f_j(u)-f_0(u)|<\varepsilon$ for all $j\ge j_0$ and all $u\in\omega$. Therefore, $\langle x,u\rangle\le -f_j(u)$ for all $j\ge j_0$ and all $u\in\omega$. This shows that $x\in K_j$ for $j\ge j_0$. If now $x_0\in K_0$ (but not necessarily $x_0\in {\rm int}\,K_0$), we can choose a sequence $(x_i)_{i\in\N}$ in ${\rm int}\,K_0$ with $x_i\to x_0$. Using the preceding observation, it is easy to construct a sequence $(y_j)_{j\in\N}$ with $y_j\in K_j$ and $y_j\to x_0$. Conversely, suppose that $x_{i_j}\in K_{i_j}$ and $x_{i_j}\to x_0$. Then $\langle x_{i_j}, u\rangle \le -f_{i_j}(u)$ for all $u\in\omega$. It follows that $\langle x_0,u\rangle \le -f(u)$ for all $u\in\omega$ and thus $x_0\in K_0$. Now it follows from \cite[Thm. 1.8.8]{Sch14} that $\lim_{j\to\infty} (K_j\cap C_t)= K_0\cap C_t$ for all sufficiently large $t>0$.
\end{proof}

\begin{lemma}\label{L3.2}
If $(K_j)_{j\in\N}$ is a sequence in $\K(C,\omega)$ such that $K_j\to K_0$ for some $C$-full set $K_0$, then $K_0\in \K(C,\omega)$. 

\end{lemma}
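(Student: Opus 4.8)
The plan is to show directly that $K_0 = C\cap\bigcap_{u\in\omega}H^-(K_0,u)$. One inclusion is trivial: $K_0\subseteq C$ (since each $K_j\subseteq C$ and $C$ is closed, and convergence in the sense of Definition \ref{D2.1} forces the limit into $C$), and $K_0\subseteq H^-(K_0,u)$ for every $u$ by definition of the support function. So the only thing to prove is the reverse inclusion
\[
 C\cap\bigcap_{u\in\omega} H^-(K_0,u)\subseteq K_0.
\]
Let $L:= C\cap\bigcap_{u\in\omega}H^-(K_0,u)$; since $K_0$ is $C$-full, $L$ is a closed convex set containing $K_0$, and by the argument in \eqref{3} (applied with $K_0$ in place of $K$) the set $C\setminus L$ is bounded, so $L$ is $C$-full as well. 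Suppose, for contradiction, that there is a point $x\in L\setminus K_0$.

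The key step is a separation argument at $x$. Since $K_0$ is closed and convex and $x\notin K_0$, there is a unit vector $v$ and a real $c$ with $\langle x,v\rangle > c \ge h(K_0,v)$, i.e.\ $x\notin H^-(v,h(K_0,v))$. I would first argue that $v$ can be taken in $\Omega_C$: indeed $x\in C$, and because $K_0$ is $C$-close (being $C$-full) one has $o\notin K_0$ whenever $K_0\ne C$; more to the point, the separating hyperplane between $x\in C$ and the convex set $K_0\subseteq C$ can be chosen to have normal in $\mathrm{int}\,C^\circ$ — this uses that $K_0$ misses a whole neighborhood-cone direction, exactly as in the remark in Section \ref{sec2} that the boundary of a $C$-close set "can be seen from the origin." Next, approximate: since $v\in\Omega_C = \Sn\cap\mathrm{int}\,C^\circ$ and $\omega$ is compact, one cannot in general assume $v\in\omega$, so instead I pick $u\in\omega$ close enough to $v$; but this only works if $x$ is actually separated from $K_0$ by a halfspace with normal in $\omega$. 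The clean way around this is the other direction: use that $x\in\bigcap_{u\in\omega}H^-(K_0,u)$ means $\langle x,u\rangle\le h(K_0,u)$ for all $u\in\omega$, and combine this with convergence of the $K_j$.

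So the real argument runs as follows. Fix $t$ large enough that $x\in\mathrm{int}\,C_t$, that $C\setminus K_0\subseteq\mathrm{int}\,H^-_t$, and that $K_j\cap C_t\to K_0\cap C_t$ in the Hausdorff metric (possible by Definition \ref{D2.1}). For each $j$, since $K_j\in\K(C,\omega)$ and $x\in C$, either $x\in K_j$ or there is $u_j\in\omega$ with $\langle x,u_j\rangle > h(K_j,u_j)$. If $x\in K_j$ for infinitely many $j$, then passing to that subsequence and using $K_j\cap C_t\to K_0\cap C_t$ together with $x\in\mathrm{int}\,C_t$ gives $x\in K_0$, contradiction. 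Otherwise, for all large $j$ we get $u_j\in\omega$ with $\langle x,u_j\rangle > h(K_j,u_j)$; by compactness of $\omega$ pass to a subsequence with $u_j\to u_*\in\omega$. Now $h(K_j,\cdot)$ converges to $h(K_0,\cdot)$ uniformly on $\omega$: this follows because on $\omega$ the support functions of $K_j$ and of the convex body $K_j\cap C_t$ agree (the supporting points lie in $C_t$ by the choice of $t$, using \eqref{1}), the bodies $K_j\cap C_t$ converge to $K_0\cap C_t$, and convergence of convex bodies yields uniform convergence of support functions. Hence $\langle x,u_j\rangle > h(K_j,u_j)$ passes to the limit to give $\langle x,u_*\rangle \ge h(K_0,u_*)$; but $x\in L$ forces $\langle x,u_*\rangle\le h(K_0,u_*)$, so $\langle x,u_*\rangle = h(K_0,u_*)$, meaning $x$ lies on the supporting hyperplane $H(u_*,h(K_0,u_*))$ of $K_0$. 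This alone is not yet the contradiction, so I would sharpen the first-case/second-case split: strict separation of $x$ from $\mathrm{int}\,K_0$ gives $\langle x,u_*\rangle > h(\mathrm{cl}(\mathrm{int}\,K_0),u_*)=h(K_0,u_*)$ when $K_0$ has interior points near the relevant face — the cleanest fix is to note that if $x\notin K_0$ then $x\notin L$ unless $x\in\mathrm{bd}\,K_0$ after all, and then to rule out boundary points by observing $K_0=L$ on the boundary trivially. In other words, I should run the separation with a strict inequality $\langle x,u_j\rangle \ge h(K_j,u_j)+\varepsilon$ for a fixed $\varepsilon>0$, which is available because $x$ has positive distance from $K_0$ and the $K_j\cap C_t$ are uniformly close to $K_0\cap C_t$; then the limit gives $\langle x,u_*\rangle \ge h(K_0,u_*)+\varepsilon$, contradicting $x\in L$.

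The main obstacle is the bookkeeping around uniform convergence of support functions on $\omega$ and the transfer between the $C$-close sets $K_j$ and the convex bodies $K_j\cap C_t$: one must be sure the relevant supporting points stay inside a fixed compact $C_t$ so that Definition \ref{D2.1}-convergence really does give uniform control of $h(K_j,\cdot)$ on $\omega$, and one must produce a uniform gap $\varepsilon$ so the strict separation survives the limit. Both are handled by fixing $t$ large at the outset using \eqref{1} and \eqref{3}, exactly as in the proof of Lemma \ref{L7.5.2}; after that the argument is a routine compactness-and-limit passage.
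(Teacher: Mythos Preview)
Your approach differs from the paper's and has a genuine gap at the final step. You correctly reduce the problem to showing $L:=C\cap\bigcap_{u\in\omega}H^-(K_0,u)\subseteq K_0$, and your observations that $x\in L\setminus K_0$ forces $x\notin K_j$ for large $j$ (via Hausdorff closeness of $K_j\cap C_t$ to $K_0\cap C_t$) and that $h_{K_j}\to h_{K_0}$ uniformly on $\omega$ are sound. The gap is the claimed ``uniform $\varepsilon$'': positive distance of $x$ from $K_j$ does \emph{not} yield $\langle x,u_j\rangle\ge h(K_j,u_j)+\varepsilon$ for some $u_j\in\omega$ with a \emph{fixed} $\varepsilon>0$. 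The nearest-point projection of $x$ onto $K_j$ produces a separating direction that need not lie in $\omega$ (it may be a boundary normal of $C$, or lie in $\Omega_C\setminus\omega$), and there is no general inequality converting $d(x,K_j)\ge\eta$ into a gap for directions restricted to the compact set $\omega$. Without such a bound your limit only gives $\langle x,u_*\rangle=h(K_0,u_*)$, i.e.\ $x$ lies on a supporting hyperplane of $K_0$ with normal $u_*\in\omega$; this does not contradict $x\notin K_0$, since $H(K_0,u_*)\cap L$ can be strictly larger than the face $F(K_0,u_*)$.

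The paper's proof is a two-line application of Lemma~\ref{L7.5.2}: each $K_j\in\K(C,\omega)$ is exactly the Wulff shape associated with $(C,\omega,-h_{K_j})$, and since $-h_{K_j}\to -h_{K_0}$ uniformly on $\omega$, Lemma~\ref{L7.5.2} gives $K_j\to L$ where $L$ is the Wulff shape associated with $(C,\omega,-h_{K_0})$; uniqueness of limits forces $K_0=L\in\K(C,\omega)$. If you want to salvage the direct argument, the correct fix is not a uniform gap but an interior perturbation: pick $y\in{\rm int}\,K_0$ (nonempty since $K_0$ is $C$-full), so that $\langle y,u\rangle\le h(K_0,u)-\delta$ for all $u\in\omega$ and some $\delta>0$; then $x_s:=(1-s)x+sy\in C$ satisfies $\langle x_s,u\rangle\le h(K_0,u)-s\delta$ for all $u\in\omega$, hence $x_s\in K_j$ for large $j$ by uniform convergence of support functions, hence $x_s\in K_0$, and $x\in K_0$ follows by letting $s\downarrow 0$. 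This is precisely the mechanism inside the proof of Lemma~\ref{L7.5.2}, which is why the paper simply invokes that lemma.
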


\begin{proof}
From $K_j\to K_0$ it follows that $h_{K_j}\to h_{K_0}$ uniformly on $\omega$, hence, by Lemma \ref{L7.5.2}, $K_j\to K$, where $K$ is the Wulff shape associated with $(C,\omega,-h_K)$. But $K=K_0$, hence $K_0\in\K(C,\omega)$. 
\end{proof}

In the following, let $K$ be the Wulff shape associated with $(C,\omega,f)$. First we state that
\begin{equation}\label{98}
S_{n-1}(K,\Omega_C\setminus\omega)=0.
\end{equation}
For the proof, let $x\in {\rm bd}\,K \cap {\rm int}\,C$. Then there exists a vector $u\in\omega$ such that $x\in H(u,-f(u))$, since otherwise $\langle x,u\rangle <-f(u)$ for all $u\in\omega$, and since $\langle x,\cdot\rangle$ and $f$ are continuous on the closed set $\omega$, there would exist a number $\varepsilon>0$ with $\langle x,u\rangle \le -f(u)-\varepsilon$ for all $u\in\omega$, hence $x\in{\rm int}\,K$, a contradiction. This shows that $x\in H(K,u)$ for some $u\in\omega$.

Now let $v\in\Omega_C\setminus\omega$ and $x\in K\cap H(K,v)$. If $x\in{\rm int}\,C$, then $x\in H(K,u)$ for some $u\in\omega$, as just shown. Hence, $x$ is a singular point of $K$. If $x\in{\rm bd}\,C$, then the point $x$ lies also in a supporting hyperplane of $C$, with a normal vector in ${\rm bd}\,\Omega_C$ and thus different from $v$, and hence is again a singular point of $K$. The assertion (\ref{98}) now follows from \cite[Thm. 2.2.5]{Sch14}.

If $v\in \omega$ is such that $h(K,v) \not= -f(v)$, then any point $x\in K\cap H(K,v)$ lies in $H(K,v)$ and in some distinct supporting hyperplane $H(u,-f(u))$ with $u\in\omega$, hence $x$ is singular. This shows that
\begin{equation}\label{100}
S_{n-1}(K, \{v\in\omega: -h(K,v) \not= f(v)\})=0.
\end{equation}
Now we can deduce from (\ref{3.0}), (\ref{98}) and (\ref{100}) that
\begin{equation}\label{99}
V_n(C\setminus K)=\frac{1}{n} \int_\omega f(u)\,S_{n-1}(K,\D u).
\end{equation}
We define
$$ V(f):= V_n(C\setminus K),$$
where $K$ is the Wulff shape associated with $(C,\omega,f)$. The function $V$ is continuous: if $f_j\to f$ uniformly on $\omega$, then $V(f_j)\to V(f)$.

We assume now that a continuous function $G:[-\varepsilon,\varepsilon]\times\omega\to\R$, for some $\varepsilon>0$, with $G(0,\cdot)>0$  is given and that there is a continuous function $g:\omega\to\R$ such that
\begin{equation}\label{101}
\lim_{\tau\downarrow 0}\frac{G(\tau,\cdot)-G(0,\cdot)}{\tau}= g\quad\mbox{uniformly on }\omega.
\end{equation}
For all sufficiently small $|\tau|$, the function $G(\tau,\cdot)$ is positive, hence $V(G(\tau,\cdot))$ is defined.

\begin{lemma}\label{L7.5.3}
Let $G$ be as above, and let $K_0$ be the Wulff shape associated with $(C,\omega,G(0,\cdot))$. Then
\begin{equation}\label{102}
\lim_{\tau\downarrow 0} \frac{V(G(\tau,\cdot))-V(G(0,\cdot))}{\tau} = \int_\omega g(u)\,S_{n-1}(K_0,\D u).
\end{equation}
The same assertion holds if in $(\ref{101})$ and $(\ref{102})$ the one-sided limit $\lim_{\tau\downarrow 0}$ is replaced by $\lim_{\tau\uparrow 0}$ or $\lim_{\tau\to 0}$.
\end{lemma}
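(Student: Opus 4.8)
The plan is to mimic Aleksandrov's classical argument, exploiting the monotonicity of Wulff shapes in their defining function together with the volume representation \eqref{99}, and then to bound the resulting difference quotients from above and below by integrals that both converge to $\int_\omega g\,S_{n-1}(K_0,\cdot)$. Write $K_\tau$ for the Wulff shape associated with $(C,\omega,G(\tau,\cdot))$, and abbreviate $f_\tau:=G(\tau,\cdot)$, so that $f_0=G(0,\cdot)$ and $(f_\tau-f_0)/\tau\to g$ uniformly on $\omega$ as $\tau\downarrow 0$.

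First I would record the key comparison inequalities. For the upper bound: by \eqref{2} we have $-h(K_\tau,u)\ge f_\tau(u)$ on $\omega$, and combining this with \eqref{99} applied to $K_0$ (whose defining function is $f_0$) gives
\begin{align*}
V(f_\tau)-V(f_0)
&=\frac1n\int_\omega f_\tau(u)\,S_{n-1}(K_\tau,\D u)-\frac1n\int_\omega f_0(u)\,S_{n-1}(K_0,\D u)\\
&\le \frac1n\int_\omega (-h(K_\tau,u))\,S_{n-1}(K_\tau,\D u)-\frac1n\int_\omega f_0(u)\,S_{n-1}(K_0,\D u).
\end{align*}
Here the first integral equals $V(f_\tau)$ again (by \eqref{3.0} and \eqref{98}, \eqref{100} applied to $K_\tau$), which is useless directly; the right trick instead is the mixed-volume style inequality obtained by replacing $f_0$ in the integral over $S_{n-1}(K_\tau,\cdot)$ by $-h(K_\tau,\cdot)$: since $-h(K_0,u)\ge f_0(u)$, one has $nV(f_0)=\int_\omega f_0\,\D S_{n-1}(K_0,\cdot)\le \int_\omega(-h(K_0,u))\,S_{n-1}(K_0,\D u)$ — no, the clean route is the standard one: use that $K_0\subseteq H^-(u,-f_0(u))$ and $K_\tau\subseteq H^-(u,-f_\tau(u))$ for $u\in\omega$, so
$$ nV(f_\tau)=\int_\omega f_\tau\,\D S_{n-1}(K_\tau,\cdot)\le \int_\omega f_\tau\,\D S_{n-1}(K_\tau,\cdot),\qquad nV(f_0)\ge \int_\omega f_\tau\,\D S_{n-1}(K_0,\cdot)-\!\!\int_\omega(f_\tau-f_0)\,\D S_{n-1}(K_0,\cdot),$$
the second because $\int_\omega f_\tau\,\D S_{n-1}(K_0,\cdot)\le \int_\omega(-h(K_0,u))\,S_{n-1}(K_0,\D u)=nV(f_0)+\int_\omega(f_0+h(K_0,u))\,\D S_{n-1}(K_0,\cdot)$ and the last integrand vanishes $S_{n-1}(K_0,\cdot)$-a.e.\ on $\omega$ by \eqref{100}. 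Subtracting yields
$$ n\bigl(V(f_\tau)-V(f_0)\bigr)\le \int_\omega f_\tau\,\D S_{n-1}(K_\tau,\cdot)-\int_\omega f_\tau\,\D S_{n-1}(K_0,\cdot)+\int_\omega(f_\tau-f_0)\,\D S_{n-1}(K_0,\cdot).$$
The first two terms combine, via the symmetric mixed-volume identity and \cite[Thm.\ 1.7.5]{Sch14} together with the representation of mixed volumes, into something $\le 0$ for $\tau>0$ small by the same argument that proves $V(\cdot)$-monotonicity; more simply, one shows directly that $\int_\omega f_\tau\,\D S_{n-1}(K_\tau,\cdot)-\int_\omega f_\tau\,\D S_{n-1}(K_0,\cdot)\le 0$ because $-h(K_\tau,\cdot)=f_\tau$ a.e.\ on $\omega$ w.r.t.\ $S_{n-1}(K_\tau,\cdot)$ and $-h(K_0,\cdot)\ge f_\tau$ fails — instead interchange the roles: $\int_\omega(-h(K_\tau,u))\,S_{n-1}(K_0,\D u)\le nV(f_0)$ is false in general, so the correct and clean statement is the one I will actually use: for $u\in\omega$, $K_0\subseteq H^-(u,-f_0(u))\subseteq H^-(u,-f_\tau(u)+\sup_\omega|f_\tau-f_0|)$, hence $-h(K_0,u)\le -f_\tau(u)+o(1)$ only after rescaling, which leads to the clean bound
$$ n\bigl(V(f_\tau)-V(f_0)\bigr)\le \int_\omega(f_\tau-f_0)\,\D S_{n-1}(K_\tau,\cdot).$$
Symmetrically, $n(V(f_\tau)-V(f_0))\ge \int_\omega(f_\tau-f_0)\,\D S_{n-1}(K_0,\cdot)$.

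Granting these two sandwich inequalities, divide by $\tau>0$: the lower bound gives $\liminf_{\tau\downarrow0}\tau^{-1}(V(f_\tau)-V(f_0))\ge \frac1n\int_\omega g\,\D S_{n-1}(K_0,\cdot)$ directly, since $(f_\tau-f_0)/\tau\to g$ uniformly and $S_{n-1}(K_0,\cdot)$ restricted to the compact $\omega$ is finite. For the $\limsup$, I would use Lemma~\ref{L7.5.2}: $f_\tau\to f_0$ uniformly, so $K_\tau\to K_0$, and then weak convergence of the surface area measures $S_{n-1}(K_\tau,\cdot)\to S_{n-1}(K_0,\cdot)$ on any compact subset of $\Omega_C$ containing $\omega$ (this is \cite[Thm.\ 4.2.1]{Sch14} applied to the convex bodies $C\setminus$-truncations, valid because $\omega$ has positive distance from $\bd\,\Omega_C$, so all the relevant normals stay in a fixed compact set and the truncating hyperplane $H_t$ contributes nothing over $\omega$). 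Combining uniform convergence $(f_\tau-f_0)/\tau\to g$ with weak convergence $S_{n-1}(K_\tau,\cdot)\to S_{n-1}(K_0,\cdot)$ over $\omega$ gives $\int_\omega\frac{f_\tau-f_0}{\tau}\,\D S_{n-1}(K_\tau,\cdot)\to\int_\omega g\,\D S_{n-1}(K_0,\cdot)$, whence $\limsup_{\tau\downarrow0}\tau^{-1}(V(f_\tau)-V(f_0))\le\frac1n\int_\omega g\,\D S_{n-1}(K_0,\cdot)$. The one-sided limit \eqref{102} follows (after noting the harmless normalization: the excerpt's \eqref{102} is stated without the $\tfrac1n$, so I would actually carry the $\tfrac1n$ through \eqref{99} consistently and match the statement as written). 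The case $\tau\uparrow 0$ is identical with the inequalities reversed; $\tau\to 0$ is the conjunction of the two.

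The main obstacle I anticipate is making the two sandwich inequalities $\int_\omega(f_\tau-f_0)\,\D S_{n-1}(K_0,\cdot)\le n(V(f_\tau)-V(f_0))\le\int_\omega(f_\tau-f_0)\,\D S_{n-1}(K_\tau,\cdot)$ genuinely rigorous in the coconvex setting. In the convex-body case these follow from the monotonicity of $V(\cdot)$ and from $nV(f)=\int f\,\D S_{n-1}(K_f,\cdot)$ together with the fact that $-h(K_f,\cdot)\ge f$ and $-h(K_f,\cdot)=f$ holds $S_{n-1}(K_f,\cdot)$-a.e.\ on $\omega$ (which here is exactly \eqref{100}); the subtlety is that our bodies are unbounded, so I must pass through the truncations $(C\setminus K_\tau)_t$ and check that the boundary contribution along $H_t$ and along $\bd\,C$ drops out — but \eqref{98}, \eqref{100}, and the argument already used to prove \eqref{99} handle precisely this, so the step is routine though it must be spelled out. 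A secondary technical point is the weak convergence of surface area measures restricted to $\omega$: I will phrase it as convergence of $S_{n-1}((C\setminus K_\tau)_t,\cdot)$ on a fixed compact neighborhood of $\omega$ in $\Omega_C$ for fixed large $t$, where these are honest convex bodies converging in the Hausdorff metric by Lemma~\ref{L7.5.2}, and invoke \cite[Thm.\ 4.2.1]{Sch14}.
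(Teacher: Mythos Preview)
Your proposed sandwich
\[
\int_\omega(f_\tau-f_0)\,\D S_{n-1}(K_0,\cdot)\;\le\; n\bigl(V(f_\tau)-V(f_0)\bigr)\;\le\;\int_\omega(f_\tau-f_0)\,\D S_{n-1}(K_\tau,\cdot)
\]
is not a matter of rigor; it is false. Take the degenerate case where $\omega$ is effectively a single direction $u_0$, so $V(f)=\tfrac{a}{n}f(u_0)^n$ and $S_{n-1}(K_f,\{u_0\})=a\,f(u_0)^{n-1}$ for a constant $a>0$. For $n=2$, $f_0(u_0)=2$, $f_\tau(u_0)=1$, the lower inequality reads $-2\le -3$, and with the roles of $f_0,f_\tau$ swapped the upper inequality fails symmetrically. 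More generally, neither inequality can be obtained from the ingredients you cite (monotonicity of $V$, the identity $nV(f)=\int_\omega f\,\D S_{n-1}(K_f,\cdot)$, and $-h_{K_f}=f$ a.e.). Tracing your own reduction, the upper bound would require $\int_\omega f_0\,\D S_{n-1}(K_\tau,\cdot)\le nV(f_0)$; but $\int_\omega f_0\,\D S_{n-1}(K_\tau,\cdot)\le \int_\omega(-h_{K_0})\,\D S_{n-1}(K_\tau,\cdot)=n\,\overline V(A_0,A_\tau,\dots,A_\tau)$, and there is no reason this mixed volume is $\le V_0$.

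What actually closes the gap is precisely the Minkowski-type inequality \eqref{8}, and this is how the paper proceeds. It introduces the mixed volumes $V_1(\tau)=\overline V(A_\tau,A_0,\dots,A_0)$ and $V_{n-1}(\tau)=\overline V(A_\tau,\dots,A_\tau,A_0)$, obtains from $-h_{K_\tau}\ge f_\tau$ and \eqref{99} the one-sided bounds
\[
\liminf_{\tau\downarrow0}\frac{V_1(\tau)-V_0}{\tau}\ge \tfrac1n\!\int_\omega g\,\D S_{n-1}(K_0,\cdot)\ge \limsup_{\tau\downarrow0}\frac{V_n(\tau)-V_{n-1}(\tau)}{\tau},
\]
and then uses \eqref{8} (in the form $V_1(\tau)^n\le V_0^{n-1}V_n(\tau)$ and its companion) to show $n\liminf\tfrac{V_1-V_0}{\tau}\le\liminf\tfrac{V_n-V_0}{\tau}$ and $\limsup\tfrac{V_n-V_0}{\tau}\le n\limsup\tfrac{V_n-V_{n-1}}{\tau}$. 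Chaining these forces equality throughout and yields \eqref{102}. Your weak-convergence step for $S_{n-1}(K_\tau,\cdot)$ is correct and is in fact used in the paper to handle the $V_{n-1}$ side, but it cannot replace the role of \eqref{8}: without it you have no link between the difference quotient of $V_n$ and the integrals you can actually bound. The visible back-and-forth in your write-up (``which is useless directly'', ``is false in general'', ``only after rescaling'') is a symptom of this missing ingredient, not of unboundedness or truncation issues.
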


\begin{proof}
For sufficiently small $\tau>0$, let $K_\tau$ be the Wulff shape associated with the triple $(C,\omega,G(\tau,\cdot))$. Let $A_\tau:= C\setminus K_\tau$. We need
\begin{eqnarray*}
V_0 &:=& V(G(0,\cdot)) = V_n(A_0),\\
V_1(\tau) &:=& \overline V(A_\tau,A_0,\dots,A_0),\\
V_{n-1}(\tau) &:=& \overline V(A_\tau,\dots,A_\tau,A_0)\\
V_n(\tau) &:=& V_n(A_\tau).
\end{eqnarray*}
By (\ref{2}), we have $-h(K_\tau,u) \ge G(\tau,u)$ for $u\in\omega$. Together with (\ref{98}), this yields
$$ \int_{\Omega_C}- h(K_\tau,u)\, S_{n-1}(K_0,\D u) \ge \int_\omega G(\tau,u)\, S_{n-1}(K_0,\D u),$$
hence, by (\ref{5.0}) and (\ref{99}),
$$ V_1(\tau)-V_0 \ge \frac{1}{n} \int_\omega ´[G(\tau,u)-G(0,u)]\,S_{n-1}(K_0,\D u).$$
From (\ref{101}) we deduce that
\begin{equation}\label{103}
\liminf_{\tau\downarrow 0} \frac{V_1(\tau)-V_0}{\tau} \ge \frac{1}{n} \int_\omega g(u)\,  S_{n-1}(K_0,\D u).
\end{equation}
From (\ref{99}), applied to $G(\tau,\cdot)$,
$$ \frac{1}{n} \int_\omega G(\tau,u)\, S_{n-1}(K_\tau,\D u) = V_n(\tau).$$
Since $-h(K_0,u) \ge G(0,u)$ for $u\in\omega$, we have, using (\ref{98}) again,
$$ \frac{1}{n} \int_\omega G(0,u)\, S_{n-1}(K_\tau,\D u) \le \frac{1}{n} \int_{\Omega_C} - h(K_0,u)\, S_{n-1}(K_\tau,\D u)=V_{n-1}(\tau).$$
Subtraction gives
\begin{equation}\label{13b} 
\frac{1}{n} \int_\omega [G(\tau,u)-G(0,u)]\, S_{n-1}(K_\tau,\D u) \ge V_n(\tau)-V_{n-1}(\tau).
\end{equation} 

For $\tau\to 0$ we have $K_\tau\to K_0$, by Lemma \ref{L7.5.2}. For sufficiently large $t>0$, this means the convergence $K_\tau\cap C_t\to K_0\cap C_t$ of convex bodies, and this implies the weak convergence
$$ S_{n-1}(K_\tau\cap C_t,\cdot)  \xrightarrow{w} S_{n-1}(K_0\cap C_t,\cdot),$$
equivalently
$$ \int_{\Sn} F\,\D S_{n-1}(K_\tau\cap C_t,\cdot) \to \int_{\Sn} F\,\D S_{n-1}(K_0\cap C_t,\cdot)$$
for every continuous function $F:\Sn\to\R$. Given a continuous function $h:\omega\to\R$, there is (by Tietze's extension theorem) a continuous function $F:\Sn\to\R$ with $F=h$ on $\omega$ and $F=0$ on $\Sn\setminus\Omega_C$. In view of (\ref{98}), it follows that
$$ \int_\omega h\,\D S_{n-1}(K_\tau,\cdot) \to \int_\omega h\,\D S_{n-1}(K_0,\cdot)$$
as $\tau\to 0$. Now (\ref{13b}) and (\ref{101}) yield
\begin{equation}\label{104}
\frac{1}{n} \int_\omega g(u)\,S_{n-1}(K_0,\D u) \ge \limsup_{\tau\downarrow 0} \frac{V_n(\tau)-V_{n-1}(\tau)}{\tau}.
\end{equation}

The Minkowski-type inequality (\ref{8}) gives
\begin{eqnarray*}
[V_1(\tau)-V_0]\sum_{k=0}^{n-1} [V_1(\tau)/V_0]^k &=& [V_1(\tau)^n-V_0^n]V_0^{1-n}\\
&\le & [V_0^{n-1}V_n(\tau)-V_0^n]V_0^{1-n}= V_n(\tau)-V_0.
\end{eqnarray*}
For $\tau\to 0$ we have $K_\tau\to K_0$ and hence, as follows from (\ref{5.0}), $V_1(\tau)\to V_0$. Therefore, we deduce that
\begin{equation}\label{105}
n\liminf_{\tau\downarrow 0} \frac{V_1(\tau)-V_0}{\tau} \le \liminf_{\tau\downarrow 0} \frac{V_n(\tau)-V_0}{\tau}.
\end{equation}
Replacing the pair $(V_1(\tau),V_0)$ by $(V_{n-1}(\tau),V_n(\tau))$, we can argue in a similar way. Here we have to observe that $V_{n-1}(\tau)/V_n(\tau)\to 1$ as $\tau\downarrow 0$, by the weak continuity of $S_{n-1}$ and the uniform convergence $h(K_\tau,\cdot)\to h(K_0,\cdot)$ (on $\omega$). We obtain
\begin{equation}\label{106}
n \limsup_{\tau\downarrow 0} \frac{V_n(\tau)-V_{n-1}(\tau)}{\tau} \ge \limsup_{\tau\downarrow 0}\frac{V_n(\tau)-V_0}{\tau}.
\end{equation}

Applying successively (\ref{104}), (\ref{106}), $\limsup \ge \liminf$, (\ref{105}), (\ref{103}), we conclude that in all these inequalities the equality sign is valid, hence
$$ \lim_{\tau\downarrow 0} \frac{V_n(\tau)-V_0}{\tau} =\int_\omega g(u)\,S_{n-1}(K_0,\D u).$$
This yields the main assertion of the lemma. The corresponding assertion for left-sided derivatives follows upon replacing $G(\tau,u)$ by $G(-\tau,u)$, and both results together give the corresponding result for limits.
\end{proof}

\section{Proof of Theorem 3}\label{sec9}

The proof of Theorem \ref{T3} requires a preparation, to ensure the existence of a maximum that we need. As in Section \ref{sec8}, $\omega\subset\Omega_C$ is a nonempty, compact set and $\K(C,\omega)$ is the set of closed convex sets $K$ that are $C$-determined by $\omega$.

\begin{lemma}\label{L7}
To every bounded set $B\subset C$ there is a number $t>0$ with the following property. If
$$ H(u,\tau)\cap B\not=\emptyset \quad\mbox{with } u\in\omega,$$
then $H(u,\tau)\cap C\subset C_t$.

There is a constant $t>0$ with the following property. If 
$$ K\in\K(C,\omega)\quad\mbox{and} \quad V_n(C\setminus K)=1,$$ then
$$ C\cap H_t\subset K. $$
\end{lemma}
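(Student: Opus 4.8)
The two assertions are of the same flavour: a uniform containment statement for all hyperplanes $H(u,\tau)$ that touch a prescribed set. For the first assertion, I would start from the inequality (\ref{1}), which says $\langle x,u\rangle\le -a_0$ for unit vectors $x\in C$ and all $u\in\omega$; equivalently, $\langle x,u\rangle\le -a_0\|x\|$ for every $x\in C$ and $u\in\omega$. If $H(u,\tau)$ meets the bounded set $B$, say at $y\in B$ with $\|y\|\le R$ (where $R$ depends only on $B$), then $\tau=\langle y,u\rangle\le -a_0\|y\|\le 0$, and also $\tau\ge -R$ since $|\langle y,u\rangle|\le\|y\|\le R$. Now take any $x\in H(u,\tau)\cap C$: then $\langle x,w\rangle$ must be controlled. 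The key point is that $\langle x,u\rangle=\tau\ge -R$ together with $\langle x,u\rangle\le -a_0\|x\|$ gives $\|x\|\le R/a_0$, whence $\langle x,w\rangle\le\|x\|\le R/a_0=:t$. Thus $H(u,\tau)\cap C\subset C_t$ with $t=R/a_0$, a constant depending only on $B$ (through $R$) and on $\omega$ (through $a_0$). This is short and essentially a repetition of the boundedness estimate (\ref{3}).

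For the second assertion, I would argue by contradiction using a normalized-volume constraint to force a lower bound on the ``size'' of $K$. Suppose no such $t$ exists; then there is a sequence $K_j\in\K(C,\omega)$ with $V_n(C\setminus K_j)=1$ but $C\cap H_{t_j}\not\subset K_j$ for some $t_j\to\infty$ (or, more carefully, for each fixed $t$ infinitely many $K_j$ fail the containment). Since $C\cap H_{t_j}\not\subset K_j$, there is a point $x_j\in C\cap H_{t_j}$ with $x_j\notin K_j$, hence some $u_j\in\omega$ with $\langle x_j,u_j\rangle>h_{K_j}(u_j)$, i.e.\ $-h_{K_j}(u_j)>-\langle x_j,u_j\rangle\ge a_0\|x_j\|\ge a_0 t_j/\|w\|$ (using $\langle x_j,w\rangle=t_j$ and $\|x_j\|\ge t_j/\|w\|=t_j$ since $\|w\|=1$). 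So $\max_{u\in\omega}(-h_{K_j}(u))\to\infty$. The plan is then to show that this forces $V_n(C\setminus K_j)\to\infty$, contradicting the normalization. Indeed, if $-h_{K_j}(v_j)$ is large for some $v_j\in\omega$, then $K_j\subset H^-(v_j,h_{K_j}(v_j))$, so the coconvex set $C\setminus K_j$ contains $C\cap H^+(v_j,h_{K_j}(v_j))=C\cap\{\langle x,v_j\rangle\ge h_{K_j}(v_j)\}$, a region of $C$ cut off far from the apex. Because $v_j\in\omega$ and $\omega$ is compact in $\Omega_C$, the cross-sections $C\cap H(v_j,s)$ for $s$ ranging over $[h_{K_j}(v_j),0]$ are bounded with volumes bounded below on a set of $s$-values of length comparable to $-h_{K_j}(v_j)$; more precisely, by the cone structure $C\cap H^+(v,t)$ for $t<0$ has volume growing like $|t|^n$ times a positive constant $c(\omega)=\min_{v\in\omega}V_n(C\cap H^+(v,-1))>0$ (the minimum is positive and attained by compactness of $\omega$ and continuity). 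Hence $V_n(C\setminus K_j)\ge c(\omega)\,(-h_{K_j}(v_j))^n\to\infty$, the desired contradiction. Therefore such a uniform $t$ exists, and for any $K$ with $V_n(C\setminus K)=1$ we get $-h_K(u)$ bounded above on $\omega$ by a constant, which by the estimate $-h_K(u)\le a_0^{-1}$(nothing)\ldots{} — more directly, boundedness of $\max_\omega(-h_K)$ by a constant $c_1$ gives, reversing the displayed computation, that every $x\in C\cap H_t$ with $t=c_1/a_0$ satisfies $\langle x,u\rangle\le-a_0\|x\|\le -a_0 t\le h_K(u)$ is \emph{not} quite it; rather one wants $\langle x,u\rangle\le h_K(u)$ for all $u\in\omega$, which holds once $-\langle x,u\rangle\ge -h_K(u)$, i.e.\ $a_0\|x\|\ge c_1$, i.e.\ $\|x\|\ge c_1/a_0$; and for $x\in C\cap H_t$ we have $\|x\|\ge t$, so $t:=c_1/a_0$ works, giving $C\cap H_t\subset C\cap\bigcap_{u\in\omega}H^-(u,h_K(u))=K$.

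The main obstacle I anticipate is the second assertion's quantitative step: establishing the uniform lower bound $V_n(C\cap H^+(v,t))\ge c(\omega)|t|^n$ with $c(\omega)>0$ independent of $v\in\omega$. This uses that for each $v\in\Omega_C$ the set $C\cap H^+(v,-1)$ is a fixed bounded convex body of positive volume, that $v\mapsto V_n(C\cap H^+(v,-1))$ is continuous on $\Omega_C$, and that $\omega$ is a compact subset of $\Omega_C$, so the infimum is a positive minimum; the scaling $V_n(C\cap H^+(v,t))=|t|^n V_n(C\cap H^+(v,-1))$ follows since $C$ is a cone. Everything else is a routine combination of the cone property, the support-function characterization $K=C\cap\bigcap_{u\in\omega}H^-(u,h_K(u))$ valid for $K\in\K(C,\omega)$, and the basic estimate (\ref{1}).
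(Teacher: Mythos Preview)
Your proposal is correct, and in fact for the first assertion it is cleaner than the paper's argument. The paper proves the first part by a geometric detour: it fixes $s$ with $B\subset C_s$, takes the supporting hyperplane $H(u,\sigma)$ of $C_s$ with outer normal $u$, finds the point $x$ of maximal norm on $H(u,\sigma)\cap C$ (which lies on $\partial C$), and then bounds $\|x\|$ via a triangle argument comparing the angle between $u$ and a boundary normal $v$ of $C$ at $x$ (this angle has sine at least $a_0$). Your direct use of the inequality $\langle x,u\rangle\le -a_0\|x\|$ together with $\tau\ge -R$ bypasses all of this trigonometry and gives the same bound $\|x\|\le R/a_0$ in one line.

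For the second assertion the two arguments genuinely diverge. The paper does \emph{not} bound $\max_\omega(-h_K)$ by a volume estimate; instead it reduces to the first part: choose $\zeta$ with $V_n(C_\zeta)>1$, observe that $V_n(C\setminus K)=1$ forces $K\cap C_\zeta\neq\emptyset$, whence every supporting hyperplane $H(K,u)$ with $u\in\omega$ meets $C_\zeta$ (the segment from $o$ to a point of $K\cap C_\zeta$ crosses it), and then the first part gives $H(K,u)\cap C\subset C_t$ for a single $t$, from which $C\cap H_t\subset K$ follows. Your route---showing $V_n(C\setminus K)\ge c(\omega)\,(-h_K(v))^n$ via the scaling $C\cap H^+(v,s)=|s|\,(C\cap H^+(v,-1))$ and the compactness of $\omega$, then converting the resulting uniform bound on $-h_K$ into the containment $C\cap H_t\subset K$ via~(\ref{1})---is entirely sound, and the continuity of $v\mapsto V_n(C\cap H^+(v,-1))$ on $\Omega_C$ is unproblematic. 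The paper's reduction is shorter once the first part is in hand; your argument is more quantitative (it makes the constant $t=c(\omega)^{-1/n}/a_0$ explicit) and does not invoke the first part at all.
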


\begin{proof}
Let $B\subset C$  be a bounded set. We choose $s$ with $B\subset C_s$. Let $u\in\omega$ be given. Let $H(u,\sigma)$ (with $\sigma<0$) be the supporting hyperplane of $C_s$ such that $C_s\subset H^+(u,\sigma)$. There is a point $z\in H(u,\sigma)\cap C_s$. Let $x$ be a point of maximal norm in $H(u,\sigma)\cap C$. Then $x\in{\rm bd}\,C$. Through $x$ there is a supporting hyperplane $H(v,0)$ of $C$. Its normal vector $v$ is in the relative boundary of $\Omega_C$, hence the angle $\psi$ between $u$ and $v$ satisfies $\sin\psi\ge a_0$, with $a_0$ as in (\ref{1}). The segment $[o,x]$ meets $H_s$ in a point $y$. Let the triangle with vertices $x,y,z$ have angle $\alpha$ at $x$ and angle $\gamma$ at $z$. Then $\alpha$ is the angle between a line in $H(u,\sigma)$ passing through $x$ and a line in $H(v,0)$ passing through $x$, hence $\alpha$ is not smaller than the angle between these hyperplanes, which is the angle $\psi$ between $u$ and $v$. It follows that
$$ \|x-y\| = \frac{\sin\gamma}{\sin\alpha} \|y-z\| \le \frac{{\rm diam}\,C_s}{a_0}.$$
Hence, we can choose a ball with center $o$, independent of $u$, that contains  $H(u,\sigma)\cap C$. Then we can choose $t>0$ such that $H(u,\sigma)\cap C\subset C_t$. If $H(u,\tau)$ meets $B$, then $\tau\ge \sigma$ and hence $H(u,\tau)\cap C\subset C_t$. This proves the first part.

For the second part, we choose a number $\zeta>0$ with $V_n(C_\zeta)>1$. By the first part, there is a number $t>0$ such that every hyperplane $H(u,\tau)$ with $H(u,\tau)\cap C_\zeta\not=\emptyset$ and $u\in\omega$ satisfies $H(u,\tau)\cap C\subset C_t$.

Let $K\in\K(C,\omega)$ be a closed convex set with $V_n(C\setminus K)=1$. Then there is some point $x\in K\cap C_\zeta$. Hence, every supporting hyperplane $H(u,\tau)$ of $K$ with outer normal vector $u\in\omega$ satisfies $H(u,\tau)\cap C_\zeta\not=\emptyset$ and, therefore, $H(u,\tau)\cap C\subset C_t$. This shows that $\tau(K,\omega)\subset C_t$, which implies $C\cap H_t\subset K$.
\end{proof}

To prove now Theorem \ref{T3}, let $\varphi$ be a nonzero, finite Borel measure on $\Omega_C$ that is concentrated on $\omega$ (that is, $\varphi(\Omega_C\setminus \omega)=0$).

Let $\C(\omega)$ denote the set of positive, continuous functions on $\omega$, equipped with the topology induced by the maximum norm. For $f\in \C(\omega)$, let $K_f$ be the Wulff shape associated with $(C,\omega,f)$, and write $V(f)=V_n(C\setminus K_f)$. Define a function $\Phi: \C(\omega)\to (0,\infty)$ by
$$ \Phi(f):= V(f)^{-1/n} \int_\omega f\,\D\varphi.$$
The function $\Phi$ is continuous. We show first that it attains a maximum on the set ${\mathcal L}':= \{-h_L: L\in\K(C,\omega),\, V_n(C\setminus L)=1\}$.

Let $L\in\K(C,\omega)$ be such that $-h_L\in\cL'$. By Lemma \ref{L7}, there is a number $t>0$ such that $C\cap H_t\subset L$. This implies that
$$ \Phi(-h_L) \le \int_\omega -h(C\cap H_t,u)\,\varphi(\D u) =: c,$$
which is independent of $L$. It follows that $\sup\{\Phi(f): f\in{\mathcal L}'\}<\infty$.

Let $(K_i)_{i\in\N}$ be a sequence with $-h_{K_i}\in\cL'$ such that
$$ \lim_{i\to\infty} \Phi(-h_{K_i}) = \sup\{\Phi(f): f\in{\mathcal L}'\}.$$
For each $i$ we have $C\cap H_t\subset K_i$, hence $K_i\cap H^-_t\not=\emptyset$. By the Blaschke selection theorem, the bounded sequence $(K_i\cap H^-_t)_{i\in\N}$ of convex bodies has a subsequence converging to some convex body. Therefore, the sequence $(K_i)_{i\in\N}$ converges to a $C$-full set $K_0$. This set satisfies $V_n(C\setminus K_0)=1$ and hence $-h_{K_0}$ belongs to ${\mathcal L}'$, as follows from Lemma \ref{L3.2}. By continuity, the functional $\Phi$ attains its maximum on $\cL'$ at $-h_{K_0}$.

Since the functional $\Phi$ is homogeneous of degree zero, also its maximum on the larger set ${\mathcal L}:=\{-h_L: L\in\K(C,\omega)\}$ is attained at $-h_{K_0}$.

Let $f\in \C(\omega)$, and let $K_f$ be the Wulff shape associated with $(C,\omega,f)$. Then we have $-h_{K_f}(u)\ge f(u)$ for $u\in\omega$ and $V(-h_{K_f})=V(f)$. Therefore, $\Phi(f)\le \Phi(-h_{K_f})\le \Phi(-h_{K_0})$. Thus, also the maximum of $\Phi$ on the set $\C(\omega)$ is attained at $-h_{K_0}$.

Let $f\in \C(\omega)$. Then $-h_{K_0}+\tau f\in \C(\omega)$ for sufficiently small $|\tau|$, hence the function
\begin{equation}\label{4.1} 
\tau \mapsto \Phi(-h_{K_0}+\tau f) = V(-h_{K_0}+\tau f)^{-1/n}\left(\int_\omega -h_{K_0}\,\D\varphi+\tau \int_\omega f\,\D\varphi\right)
\end{equation}attains a maximum at $\tau=0$. Lemma \ref{L7.5.3} yields
$$ \frac{\D}{\D\tau} V(-h_{K_0}+\tau f)\Big|_{\tau=0} =\int_\omega f\,\D S_{n-1}({K_0},\cdot).$$
Therefore, and since $V(-h_{K_0})=1$, the derivative of the function (\ref{4.1}) at $\tau=0$ is given by
$$ -\int_\omega f\,\D S_{n-1}({K_0},\cdot) \cdot\frac{1}{n}\int_\omega -h_{K_0}\,\D\varphi + \int_\omega f\,\D\varphi,$$
and this is equal to zero. With
$$ \lambda:=\frac{1}{n} \int_\omega -h_{K_0}\,\D\varphi$$
we have
$\lambda>0$ and
$$\int_\omega f\,\D\varphi =\lambda \int_\omega f\,\D S_{n-1}({K_0},\cdot).$$
Since this holds for all functions $f\in \C(\omega)$, it holds for all continuous real functions $f$ on $\omega$. This yields $\varphi=\lambda S_{n-1}({K_0},\cdot) = S_{n-1}(\lambda^{\frac{1}{n-1}}{K_0},\cdot)$. Thus, $\varphi$ is the surface area  measure of the $C$-full set $\lambda^{\frac{1}{n-1}}{K_0}$.

\section{The cone-volume measure of a $C$-close set}\label{sec10}

In this section, we introduce the cone-volume measure of a $C$-close set $K$. Although we use the same terminology, this has to be distinguished from the cone-volume measure of a convex body containing the origin in the interior. Since $o\notin \,K$, there is little danger of ambiguity.

By $\B(\Omega_C)$ we denote the $\sigma$-algebra of Borel sets in $\Omega_C$.

Let $K$ be a $C$-close set and let $\omega\in \B(\Omega_C)$. If $x\in\tau(K,\omega)$, then the half-open segment $[o,x)$ belongs to $C\setminus K$. We define
$$ M(K,\omega)= \bigcup_{x\in \tau(K,\omega)}[o,x).$$
The set $M(K,\omega)$ is Lebesgue measurable. This can be shown by using arguments analogous to those in the proof of \cite[Lemma 2.2.13]{Sch14}): the system of all subsets $\omega\subset \Omega_C$ for which $M(K,\omega)$ is Lebesgue measurable is a $\sigma$-algebra containing the closed sets, and hence all Borel subsets of $\Omega_C$. If $\omega_1\cap \omega_2=\emptyset$, then $M(K,\omega_1)\cap M(K,\omega_2)$ has measure zero, from which one can deduce that 
\begin{equation}\label{5.1}  
V_K(\omega):= \Ha^n(M(K,\omega)),\quad \omega\in\B(\Omega_C),
\end{equation}
defines a measure on $\Omega_C$. It is finite, by the definition of a $C$-close set. We call it the {\em cone-volume measure} of $K$.

\begin{lemma}\label{L3}
The cone-volume measure of the $C$-close set $K$ can be represented by
\begin{equation}\label{5.2} 
V_K(\omega) = \frac{1}{n} \int_\omega -h(K,u)\, S_{n-1}(K,\D u)
\end{equation}
for $\omega\in\B(\Omega_C)$.
\end{lemma}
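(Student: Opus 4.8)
The plan is to recognize both sides of (\ref{5.2}) as finite Borel measures on $\Omega_C$, reduce the claim to the case of a compact $\omega\subset\Omega_C$, and there pass to a bounded convex body by truncation and evaluate the volume of the resulting cone‑like region directly.

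First I would check that both sides are finite Borel measures on $\Omega_C$: for the left side $V_K$ this was observed just before the lemma, while the right side is $\tfrac1n$ times the integral of the nonnegative continuous function $-h(K,\cdot)$ against $S_{n-1}(K,\cdot)$, and its total mass is $\tfrac1n\int_{\Omega_C}-h(K,u)\,S_{n-1}(K,\D u)=\tfrac1n\int_{\Omega_C}\overline h(A,u)\,\overline S_{n-1}(A,\D u)=V_n(A)<\infty$ by Lemma \ref{L1}, with $A=C\setminus K$. Since the compact subsets of $\Omega_C$ form an intersection‑stable family generating $\B(\Omega_C)$, and $\Omega_C$ is the union of an increasing sequence of such sets, it suffices to verify (\ref{5.2}) for compact $\omega\subset\Omega_C$ (uniqueness of finite measures).

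So I would fix a compact $\omega\subset\Omega_C$. By the distance estimate (\ref{1}) applied to $\omega$, the set $\tau(K,\omega)$ is bounded; choose $t>0$ so large that $\tau(K,\omega)\subset\inn C_t$ and put $L:=K\cap H^-_t$, a convex body with $o\notin L$. The truncation at $H_t$ changes the boundary structure of $K$ only at points of $H_t$ and on $\bd C$, and such points either are singular points of $L$ or carry outer normals only in $\Sn\setminus\Omega_C$; hence $\tau(L,\omega)=\tau(K,\omega)$ up to an $\Ha^{n-1}$‑null set, $h(L,\cdot)=h(K,\cdot)$ on $\omega$, and, using \cite[Thm. 2.2.5]{Sch14}, $S_{n-1}(L,\cdot)=S_{n-1}(K,\cdot)$ on $\omega$, while $M(K,\omega)=M(L,\omega)$ up to a Lebesgue‑null set. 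This reduces the lemma to the convex‑body identity $\Ha^n(M(L,\omega))=\tfrac1n\int_\omega -h(L,u)\,S_{n-1}(L,\D u)$. The careful verification that these truncation "artifacts'' are harmless — i.e. the $\Ha^{n-1}$‑ and $\Ha^n$‑null set bookkeeping — is the first place where some work is needed; I expect the argument \cite[Thm. 2.2.5]{Sch14} on singular boundary points to do most of it, together with the observation that segments $[o,x)$ with $x\in\bd C$ are themselves Lebesgue‑null.

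For the convex‑body identity I would argue as follows. Each $u\in\omega$ has $h(L,u)=h(K,u)<0$, so $H(u,h(L,u))$ strictly separates $o$ from $L$; hence for $v\in\Sn\cap\inn C$ the ray $\{rv:r\ge0\}$, if it meets $\tau(L,\omega)$, meets $\bd L$ at the point $\rho_L(v)v$ facing $o$, and $M(L,\omega)$ equals $\{rv:v\in\widetilde\omega,\ 0<r<\rho_L(v)\}$ up to a null set, where $\widetilde\omega:=\{v\in\Sn\cap\inn C:\rho_L(v)v\in\tau(L,\omega)\}$. Integration in polar coordinates then gives $\Ha^n(M(L,\omega))=\tfrac1n\int_{\widetilde\omega}\rho_L(v)^n\,\Ha^{n-1}(\D v)$. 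On the other hand $S_{n-1}(L,\cdot)$ is the image of $\Ha^{n-1}$, restricted to the regular points of $\bd L$, under the spherical image map $x\mapsto u_L(x)$ (singular points being $\Ha^{n-1}$‑null by \cite[Thm. 2.2.5]{Sch14}), and $-h(L,u_L(x))=\langle x,-u_L(x)\rangle$ at a regular point $x$; so $\tfrac1n\int_\omega -h(L,u)\,S_{n-1}(L,\D u)=\tfrac1n\int_{\tau(L,\omega)}\langle x,-u_L(x)\rangle\,\Ha^{n-1}(\D x)$. The area formula for the radial graph $v\mapsto\rho_L(v)v$ over $\widetilde\omega$ (its $(n-1)$‑dimensional Jacobian being $\rho_L(v)^{n-1}/|\langle v,u_L(\rho_L(v)v)\rangle|$), combined with $\langle x,-u_L(x)\rangle=\rho_L(v)\,|\langle v,u_L(x)\rangle|$ for $x=\rho_L(v)v$, identifies this last integral with $\tfrac1n\int_{\widetilde\omega}\rho_L(v)^n\,\Ha^{n-1}(\D v)$, completing the proof. (Alternatively one could obtain the convex‑body identity by applying the divergence theorem to the field $x\mapsto x$ on $M(L,\omega)$, whose boundary consists of the piece $\tau(L,\omega)$ of $\bd L$ with outer normal $-u_L$ and of radial faces on which $\langle x,\nu\rangle=0$; but since $M(L,\omega)$ need not be a nice domain, I would rather rely on the area formula, which only uses the a.e. differentiability coming from convexity, as in \cite[Sect. 2.5]{Sch14}.) I expect the genuine obstacle to be exactly these two measure‑theoretic points — the harmlessness of the truncation and the justification of the radial change of variables on the possibly irregular surface piece $\tau(L,\omega)$ — rather than any new geometric idea.
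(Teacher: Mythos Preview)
Your argument is correct and takes a genuinely different route from the paper's. The paper defines $\psi_K(\omega):=\tfrac1n\int_\omega -h(K,u)\,S_{n-1}(K,\D u)$ and, for a $C$-full $K$, approximates it by the sets $K_j:=C\cap\bigcap_{i=1}^j H^-(K,u_i)$, for which the identity $V_{K_j}=\psi_{K_j}$ is elementary (polyhedral case). It then proves two separate weak convergences, $V_{K_j}\xrightarrow{w} V_K$ (via the polar representation and Fatou) and $\psi_{K_j}\xrightarrow{w}\psi_K$ (via weak continuity of $S_{n-1}$ and uniform convergence of support functions), to conclude $V_K=\psi_K$; the extension from $C$-full to $C$-close sets is done at the end, by replacing $K$ on a compact $\omega$ by the $C$-full set $C\cap\bigcap_{u\in{\rm clos}\,\omega}H^-(K,u)$.

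You instead reduce to compact $\omega$, truncate to the convex body $L=K\cap H^-_t$, and prove the convex-body identity directly via the pushforward description of $S_{n-1}(L,\cdot)$ together with the area formula for the radial graph; this is essentially the standard proof of the convex-body analogue \cite[Lemma~9.2.4]{Sch14}, transplanted to the present setting by truncation. Your approach is shorter and avoids all limit arguments. One small point: in fact $\tau(L,\omega)=\tau(K,\omega)$ holds exactly (not merely up to a null set) once $t$ is chosen so large that $\tau(K,\omega)\subset\inn H^-_t$, since for $u\in\omega$ the face $F(K,u)$ lies in $\inn H^-_t$ and hence $F(L,u)=F(K,u)$; the null-set caveat is only needed for the possible intersection $\tau(K,\omega)\cap\bd C$, which consists of singular points of $K$. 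What the paper's longer route buys is that its weak-convergence step for $V_{K_j}$ is reused verbatim in Section~\ref{sec12} (proof of Theorem~\ref{T5}), so you would have to supply that continuity separately if you adopted your proof here.
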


We mention that in the theory of convex bodies the cone-volume measure of a convex body containing the origin is usually defined by an integral representation corresponding to (\ref{5.2}), whereas an interpretation corresponding to (\ref{5.1}), which justifies the name, is mentioned only for polytopes. An exception is \cite[Lemma 9.2.4]{Sch14}, which we follow here in some respects.

\begin{proof}
For any $C$-close set $K$, we define
$$ \psi_K(\omega) := \frac{1}{n} \int_\omega - h(K,u)\,S_{n-1}(K,\D u), \quad\omega\in\B(\Omega_C). $$
Then $\psi_K$ is a measure on $\Omega_C$.

First let $K$ be a $C$-full set, so that $C \setminus K$ is bounded. We choose $t>0$ with $C \setminus K\subset C_t$, and then we choose a dense sequence $(u_i)_{i\in\N}$ in $\Omega_C$ and define
\begin{equation}\label{20a} 
K_j:= C\cap \bigcap_{i=1}^j H^-(K,u_i)
\end{equation}
for $j\in\N$, where $H^-(K,u_i)$ is the supporting halfspace of the closed convex set $K$ with outer normal vector $u_i$. Then $K_j$ is a $C$-full set. From the denseness of the sequence $(u_i)_{i\in\N}$ it follows that 
$$ K_j\to K$$ 
as $j\to\infty$. This implies
$$ \lim_{j\to\infty} V_n(C\setminus K_j) = V_n(C\setminus K)$$
(by the continuity of the volume of convex bodies, applied to $K_j\cap C_t$). In view of (\ref{20a}), it is an elementary matter to show that
\begin{equation}\label{3.20} 
V_{K_j}=\psi_{K_j}
\end{equation}
(since $K_j$ is the intersection of $C$ with a polyhedron).

We show the weak convergence
\begin{equation}\label{3.22}
V_{K_j} \xrightarrow{w} V_K,\quad j\to\infty.
\end{equation}
(See, e.g., Ash \cite[Sect. 4.5]{Ash72}, for equivalent definitions of weak convergence of finite Borel measures on metric spaces.) For this, we define the radial function $\rho(K,\cdot)$ of $K$ on $C\cap\Sn$ by
$$ \rho(K,u) := \sup\{r\ge 0: ru \in C\setminus K\}, \quad u\in C\cap \Sn.$$
Let $\nu(x):=x/\|x\|$ for $x\in\R^n\setminus\{o\}$. By using polar coordinates, we obtain
\begin{eqnarray*}
V_K(\omega) = \Ha^n\left(\bigcup_{x\in \tau(K,\omega)} [o,x)\right) &=& \int_{\nu(\tau(K,\omega))} \int_0^{\rho(K,u)} s^{n-1}\,\D s\,\Ha^{n-1}(\D u)\\
&=& \frac{1}{n} \int_{\nu(\tau(K,\omega))} \rho(K,u)^n\,\Ha^{n-1}(\D u).
\end{eqnarray*}
Similarly,
$$ V_{K_j}(\omega) = \frac{1}{n} \int_{\nu(\tau(K_j,\omega))} \rho(K_j,u)^n\,\Ha^{n-1}(\D u).$$
For $\Ha^{n-1}$-almost all $u\in C\cap\Sn$, the outer unit normal vector $n(K_j,u)$ of $K_j$ at the boundary point $\rho(K_j,u)u$ is uniquely determined for all $j\in\N_0$, as follows from \cite[Thm. 2.2.5]{Sch14}, applied to the countably many convex bodies $K_j\cap C_t$. Since $K_j\to K$, for almost all $u\in C\cap\Sn$ we have $n(K_j,u)\to n(K,u)$ for $j\to\infty$. For an open set $\omega\subset\Omega_C$, this implies that for almost all $u\in C\cap\Sn$, the inequality
$$ {\mathbbm 1}_{\nu(\tau(K,\omega))}(u) \le \liminf_{j\to\infty} {\mathbbm 1}_{\nu(\tau(K_j,\omega))}(u)$$
holds. Fatou's lemma and the continuous dependence of $\rho(K,\cdot)$ on $K\cap C_t$ give
$$ V_K(\omega) \le \liminf_{j\to\infty} V_{K_j}(\omega).$$
Since
\begin{eqnarray*}
V_K(\Omega_C) &=& V_n(C\setminus K) = V_n(C_t)-V_n(K\cap C_t)\\
& =& \lim_{j\to\infty}(V_n(C_t)-V_n(K_j\cap C_t))= \lim_{j\to\infty} V_{K_j}(\Omega_C),
\end{eqnarray*}
this completes the proof of the weak convergence (\ref{3.22}).

Next, we show the weak convergence
\begin{equation}\label{3.21} 
\psi_{K_j} \xrightarrow{w} \psi_K,\quad j\to\infty,
\end{equation}
of the finite measures $\psi_{K_j}$. For $\omega\in\B(\Omega_C)$ we have
\begin{eqnarray*}
\psi_K(\omega) &=& \frac{1}{n} \int_\omega -h(K,u)\,S_{n-1}(K,\D u)\\
&=& \frac{1}{n} \int_\omega |h(K\cap C_t,u)|\,S_{n-1}(K\cap C_t,\D u)
\end{eqnarray*}
and similarly
$$ \psi_{K_j} (\omega) = \frac{1}{n} \int_\omega |h(K_j\cap C_t,u)|\,S_{n-1}(K_j\cap C_t,\D u).$$

For $j\in\N_0$, let $\eta_j$ be the measure defined by
\begin{eqnarray*} 
\eta_j(\omega) &:=& \frac{1}{n} \int_\omega|h(K\cap C_t,u)|\,S_{n-1}(K_j\cap C_t,\D u),\quad \omega\in\B(\Omega_C),\,j\in\N,\\
\eta_0(\omega) &:=& \frac{1}{n} \int_\omega|h(K\cap C_t,u)|\,S_{n-1}(K\cap C_t,\D u),\quad \omega\in\B(\Omega_C).
\end{eqnarray*}
Since $h(K\cap C_t,\cdot)$ is continuous and the area measure $S_{n-1}$ is weakly continuous, we have $\eta_j \xrightarrow{w} \eta_0$ as $j\to\infty$. By \cite[Lemma 1.8.14]{Sch14}, the sequence $(h(K_j\cap C_t,\cdot))_{j\in\N}$  converges uniformly on $\Sn$ to $h(K\cap C_t,\cdot)$. Hence, for each $\varepsilon>0$ we have $|h(K\cap C_t,\cdot)|\le |h(K_j\cap C_t,\cdot)|+\varepsilon$ for all $u\in\Sn$ and hence $\eta_j(\omega)\le \psi_{K_j}(\omega)+c\varepsilon$, if $j$ is sufficiently large; here $c$ is a constant independent of $j$. Since this holds for all $\varepsilon>0$ and since $\eta_j \xrightarrow{w} \eta_0$, we deduce that for each open set $\omega\subset\Omega_C$ we get
$$ \psi_K(\omega)= \eta_0(\omega) \le \liminf_{j\to\infty} \eta_j(\omega) \le \liminf_{j\to\infty} \psi_{K_j}(\omega).$$
Using Lemma \ref{L1},
\begin{eqnarray*} 
\psi_K(\Omega_C) &=& V_n(C\setminus K) = V_n(C_t)-V_n(K\cap C_t)\\
& =& \lim_{j\to\infty}(V_n(C_t)-V_n(K_j\cap C_t))= \lim_{j\to\infty} \psi_{K_j}(\Omega_C).
\end{eqnarray*}
This completes the proof of the weak convergence (\ref{3.21}).

From (\ref{3.20}), (\ref{3.22}), (\ref{3.21}) we now conclude that that $V_K=\psi_K $. This is the assertion of the lemma for a $C$-full set $K$.

Now let $K$ be a $C$-close set for which $C\setminus K$ is unbounded. Consider an open set $\omega\subset\Omega_C$ with ${\rm clos}\,\omega\subset\Omega_C$. Then the set $\tau(K,\omega)$ is bounded. The set
$$ M:= C\cap\bigcap_{u\in{\rm clos}\,\omega} H^-(K,u)$$
is $C$-full and satifies $\tau(K,\omega)=\tau(M,\omega)$ and hence $V_K(\omega) = V_M(\omega)$, moreover, $\psi_K(\omega)= \psi_M(\omega)$. Since $C\setminus M$ is bounded, we have $V_M(\omega)=\psi_M(\omega)$. Therefore, $V_K(\omega)= \psi_K(\omega)$. Since $V_K$ and $\psi_K$ are both measures on $\Omega_C$, the equality $V_K(\omega)=\psi_K(\omega)$ extends to arbitrary Borel subsets $\omega\in\B(\Omega_C)$.
\end{proof}

\section{Proof of Theorem 4}\label{sec11}

As before, let $\omega\subset\Omega_C$ be a nonempty, closed set, and let $\C(\omega)$ denote the space of positive, continuous functions on $\omega$. Let $\varphi$ be a nonzero, finite Borel measure on $\Omega_C$ that is concentrated on $\omega$. To prove Theorem \ref{T4}, we follow the procedure in the proof of Theorem \ref{T3}, modified in the way the logarithmic Minkowski problem was treated by B\"or\"oczky, Lutwak, Yang and Zhang in \cite{BLYZ13}. 

Without loss of generality, we assume that $\varphi(\omega)=1$. Define a function $\Phi: \C(\omega)\to (0,\infty)$ by
$$ \Phi(f) := V(f)^{-1/n}\, \exp\int_\omega \log f\,\D\varphi, \quad f\in \C(\omega).$$

The following assertions are verified precisely as in the proof of Theorem \ref{T3}. The functional $\Phi$ is continuous and attains a maximum on the set $\cL':= \{-h_L: L\in\K(C,\omega),\, V_n(C\setminus L)=1)\}$, say at $-h_{K_0}$. Since $\Phi$ is homogeneous of degree zero (here it is used that $\varphi(\omega)=1$) this is also the maximum of $\Phi$ on the set $\cL:= \{-h_L:L\in\K(C,\omega)\}$. Let $f\in \C(\omega)$, and let $K_f$ be the Wulff shape associated with $(C,\omega,f)$. Then $-h_{K_f} \ge f$ and $V(f)=V(-h_{K_f})$, hence $\Phi(f)\le \Phi(-h_{K_f}) \le
\Phi(-h_{K_0})$, since $-h_{K_f}\in\cL$. Thus, $\Phi$ attains its maximum on $\C(\omega)$ at $-h_{K_0}$.

Now let $f\in \C(\omega)$ and define
$$ G(\tau,\cdot):= -h_{K_0} e^{\tau  f}.$$
Then $G(\tau,\cdot)\in \C(\omega)$, hence the function
\begin{equation}\label{6.1a} 
\tau \mapsto \Phi(G(\tau,\cdot)) = V(G(\tau,\cdot))^{-1/n}\,\exp\int_\omega \log G(\tau,\cdot)\,\D\varphi
\end{equation}
attains its maximum at $\tau=0$. Since
$$ \frac{G(\tau,\cdot)-G(0,\cdot)}{\tau} \to -fh_{K_0} \quad \mbox{uniformly on $\omega$}$$
as $\tau\to 0$, we can conclude from Lemma \ref{L7.5.3} that
$$ \frac{\D}{\D \tau} V(G(\tau,\cdot))\Big|_{\tau=0} = \int_\omega -fh_{K_0}\,\D S_{n-1}(K_0,\cdot).$$
Therefore, the derivative of the function (\ref{6.1a}) at $\tau=0$ is given by
$$ \left[-\frac{1}{n} \int_\omega -fh_{K_0}\,\D S_{n-1}(K_0,\cdot) +\int_\omega f\,\D\varphi\right] \exp \int_\omega \log(-h_{K_0})\,\D\varphi$$
(note that $V(G(0,\cdot)) = V(-h_{K_0}) = V_n(C\setminus K_0)=1$). Since this is equal  to zero, we obtain, in view of Lemma \ref{L3},
$$ \int_\omega f \,\D V_{K_0} = \int_\omega f\,\D\varphi.$$
Since this holds for all $f\in \C(\omega)$, we conclude that $V_{K_0}=\varphi$.

\section{Proof of Theorem 5}\label{sec12}

Let $\varphi$ be a non-zero, finite Borel measure on $\Omega_C$. We choose a sequence $(\omega_j)_{j\in\N}$ of open sets in $\Omega_C$ such that 
$$ \varphi(\omega_1)>0,\quad {\rm clos}\,\omega_j\subset \omega_{j+1},\quad\bigcup_{j\in\N} \omega_j=\Omega_C. $$
For $j\in\N$, we define the measure $\varphi_j$ by
$$ \varphi_j(\eta):= \varphi(\eta\cap \omega_j) \quad\mbox{for }\eta\in\B(\Omega_C).$$
Then $\varphi_j$ is a nonzero, finite Borel measure that is concentrated on ${\rm clos}\,\omega_j$. By Theorem \ref{T4}, there exists a $C$-full set $K_j$ with
$$ \varphi_j=V_{K_j}.$$
We choose $t>0$ with $V_n(C_t)>\varphi(\Omega_C)$. If $K_j\cap C_t=\emptyset$ for some $j$, then 
$$ \varphi(\omega_j) = V_{K_j}(\omega_j) =V_{K_j}(\Omega_C) = V_n(C\setminus K_j) \ge V_n(C_t)>\varphi(\Omega_C) \ge \varphi(\omega_j),$$
a contradiction. Therefore, $K_j\cap C_t\not=\emptyset$ for all $j$. We choose an increasing sequence $(t_k)_{k\in\N}$ with $t_1\ge t$ and $t_k\uparrow \infty$ as $k\to\infty$. 

Let $j\in\N$. Since $(K_j\cap C_{t_1})_{j\in\N}$ is a bounded sequence of nonempty convex bodies, it has a convergent subsequence. Thus, there are a subsequence $(j_i^{(1)})_{i\in\N}$ of $(j)_{j\in\N}$ and a convex body $M_1$ such that
$$ K_{j_i^{(1)}}\cap C_{t_1}\to M_1 \quad\mbox{as }i\to\infty.$$
Similarly, there are a subsequence $(j_i^{(2)})_{i\in\N}$ of $(j_i^{(1)})_{i\in\N}$ and a convex body $M_2$ such that
$$ K_{j_i^{(2)}}\cap C_{t_2}\to M_2 \quad\mbox{as }i\to\infty.$$
By induction, we obtain, for each $k\in\N$, a subsequence $(j_i^{(k)})_{i\in\N}$ of $(j_i^{(k-1)})_{i\in\N}$ and a convex body $M_k$ such that
$$ K_{j_i^{(k)}}\cap C_{t_k}\to M_k \quad\mbox{as }i\to\infty.$$
Now we take the diagonal sequence $(j_i)_{i\in \N}= (j_i^{(i)})_{i\in\N}$. Then
$$ K_{j_i} \cap C_{t_k}\to M_k\quad \mbox{as }i\to\infty,$$
for each $k\in\N$.

We change the notation and write $K_i$ for $K_{j_i}$ and $\omega_i$ for $\omega_{j_i}$, Then $\omega_i\uparrow \Omega_C$ ($i\to\infty$) and
$$ K_i\cap C_{t_k} \to M_k\;(i\to\infty)\quad\mbox{for }k\in\N.$$

Let $1\le m<k$. Then, as $i\to\infty$,
$$ K_i\cap C_{t_m}\to M_m, \qquad K_i\cap C_{t_k}\to M_k,$$ 
and the latter implies $ K_i\cap C_{t_m}\to M_k\cap C_{t_m}$, thus
$$M_m= M_k\cap C_{t_m}.$$
If we define
$$ M:= \bigcup_{k\in\N} M_k,$$
then
$$ M\cap C_{t_k}= M_k\quad \mbox{for } k\in\N.$$
From this it follows that $M\subset C$ is a closed convex set.

Now let $j\in\N$ and let $\omega\subset\omega_j$ be some open set. The set $\tau(M,\omega)$ is bounded, hence there is some $k\in\N$ with $\tau(M,\omega)= \tau(M_k,\omega)$. From $K_i\cap C_{t_k} \to M_k$ and the weak continuity of the cone-volume measure, which was proved in Section \ref{sec10}, we have
$$ V_{M_k}(\omega) \le \liminf_{i\to\infty} V_{K_i\cap C_{t_k}}(\omega).$$
The definition of $K_i$ implies that $V_{K_i\cap C_{t_k}}(\omega)=\varphi(\omega)$, hence
$$ V_{M_k}(\omega)\le \varphi(\omega).$$
On the other hand, if $\beta\subset \omega_j$ is a closed set, then a similar argument gives
\begin{equation}\label{7.1} 
V_{M_k}(\beta)\ge \limsup_{i\to\infty} V_{K_i\cap C_{t_k}}(\beta) = \varphi(\beta).
\end{equation}
For a given closed set $\beta\subset\omega_j$, we can choose a sequence $(\beta_r)_{r\in\N}$ of open neighborhoods of $\beta$ with $\beta_r\subset\omega_{j+1}$ and $\beta_r\downarrow\beta$ as $r\to\infty$. As above, we then have $V_{M_k}(\beta_r) \le \varphi(\beta_r)$. Since $\beta_r\downarrow \beta$, this gives $V_{M_k}(\beta)\le\varphi(\beta)$. Together with (\ref{7.1}), this shows that $V_{M_k}(\beta)=\varphi(\beta)$. In particular, $V_{M_k}(\beta)\le \varphi(\Omega_C)<\infty$ for all sufficiently large $k$. It follows that $V_n(C\setminus M)\le\varphi(\Omega_C)<\infty$, thus $M$ is a $C$-close set. Therefore, its cone-volume measure $V_M$ is defined. For any closed set $\beta\subset\Omega_C$ satisfying $\beta\subset\omega_j$ for some $j\in\N$, we have shown that, for suitable $k$, $\tau(M,\omega)=\tau(M_k,\omega)$ and hence $V_M(\beta)= V_{M_k}(\beta)=\varphi(\beta).$ Since $V_M$ and $\varphi$ are Borel measures on $\Omega_C$, and $\omega_j\uparrow\Omega_C$, the equality $V_M(\beta)=\varphi(\beta)$ holds for every closed set $\beta\in\B(\Omega_C)$ and hence for every Borel set $\beta\in\B(\Omega_C)$. Thus, $M$ is a $C$-close set with cone-volume measure $\varphi$. This completes the proof of Theorem \ref{T5}.

\noindent Author's address:\\[1mm]
Rolf Schneider\\
Mathematisches Institut, Albert-Ludwigs-Universit\"at\\
Eckerstr. 1, D-79104 Freiburg i. Br.\\
Germany\\[1mm]
e-mail: rolf.schneider@math.uni-freiburg.de


\begin{thebibliography}{10}

\bibitem{Ale38a} A.D. Aleksandrov,   Zur Theorie der gemischten Volumina von konvexen K\"{o}rpern, III. Die Erweiterung zweier Lehrs\"{a}tze Minkowskis \"{u}ber die konvexen Polyeder auf beliebige konvexe Fl\"{a}chen (in Russian). \textit{Mat. Sbornik} N. S. \textbf{3} (1938), 27--46. (English translation in \cite{Ale96})

\bibitem{Ale96} A.D. Aleksandrov [A.D. Alexandrov], {\em Selected works. Part I: Selected scientific papers.} (Yu.G. Reshetnyak, S.S. Kutateladze, eds.), transl. from  the Russian by P.S. Naidu. Classics of Soviet Mathematics, 4. Gordon and Breach, Amsterdam, 1996.

\bibitem{Ash72} R.B. Ash, {\em Measure, Integration, and Functional Analysis.} Academic Press, New York, 1972.

\bibitem{BH15} K.J. B\"or\"oczky, P. Heged\H{u}s, The cone volume measure of antipodal points. {\em Acta Math. Hungar.} {\bf 146} (2015), 449--465.

\bibitem{BHZ16} K.J. B\"or\"oczky, P. Heged\H{u}s, G. Zhu, On the discrete logarithmic Minkowski problem. {\em Int. Math. Res. Not.} 2016, no. 6, 1807--1838. 

\bibitem{BHk16} K.J. B\"or\"oczky, M. Henk, Cone-volume measure of general centered convex bodies. {\em Adv. Math.} {\bf 286} (2016), 703--721.

\bibitem{BHk17} K.J. B\"or\"oczky, M. Henk, Cone-volume measure and stability. {\em Adv. Math.} {\bf 308} (2017), 24--50.

\bibitem{BLYZ13} K.J. B\"or\"oczky, E. Lutwak, D. Yang, G. Zhang, The logarithmic Minkowski problem. {\em J. Amer. Math. Soc.} {\bf 26} (2013), 831--852.


\bibitem{HL14} M. Henk, E. Linke, Cone-volume measures of polytopes. {\em Adv. Math.} {\bf 253} (2014), 50--62.

\bibitem{HSW05} M. Henk, A. Sch\"urman, J.M. Wills,  Ehrhart polynomials and successive minima. {\em Mathematika} {\bf 52} (2005), 1--16.

\bibitem{KT14} A. Khovanski\u{\i}, V. Timorin, On the theory of coconvex bodies. {\em Discrete Comput. Geom.} {\bf 52} (2014), 806--823.

\bibitem{MR14} E. Milman, L. Rotem, Complemented Brunn--Minkowski inequalities and isoperimetry for homogeneous and non-homogeneous measures. {\em Adv. Math.} {\bf 262} (2014), 867--908. Corrigendum: {\em Adv. Math.} {\bf 307} (2017), 1378--1379.

\bibitem{Nao07} A. Naor, The surface measure and cone measure on the sphere of $\ell^n_p$. {\em Trans. Amer. Math. Soc.} {\bf 359} (2007), 1045--1079.

\bibitem{PW12} G. Paouris, E.M. Werner, Relative entropy of cone measures and $L_p$ centroid bodies. {\em Proc. London Math. Soc.} {\bf 104} (2012), 253--286.

\bibitem{Sch14} R. Schneider, {\em Convex Bodies: The Brunn-Minkowski Theory.} 2nd edn., Encyclopedia of Mathematics and Its Applications 151, Cambridge University Press, Cambridge, 2014.

\bibitem{Sta02} A. Stancu, The discrete planar $L_0$-Minkowski problem. {\em Adv. Math.} {\bf 167} (2002), 160--174.

\bibitem{Sta08} A. Stancu, The necessary condition for the discrete $L_0$-Minkowski problem in $\R^2$. {\em J. Geom.} {\bf 88} (2008), 162--168.

\bibitem{Xio10} G. Xiong, Extremum problems for the cone volume functional of convex polytopes. {\em Adv. Math.} {\bf 225} (2010), 3214--3228.

\bibitem{Zhu14} G. Zhu, The logarithmic Minkowski problem for polytopes. {\em Adv. Math.} {\bf 262} (2014), 909--931.

\end{thebibliography}
\end{document}